\newtheorem{theorem}{Theorem}[section]
\newtheorem{lemma}[theorem]{Lemma}
\theoremstyle{definition}
\newtheorem{example}[theorem]{Example}
\theoremstyle{remark}
\newtheorem{remark}[theorem]{Remark}
\numberwithin{equation}{section}
\newcommand{\rd}{{\mathbb R^d}}
\newcommand{\rr}{{\mathbb R}}
\def\E{{\mathbb E}}
\def\P{{\mathbb P}}
\def\D{{\mathbb D}}
\newcommand{\convfd}{\stackrel{f.d.}{\Longrightarrow}}
\begin{document}

\title{\bf The fractional Poisson process and the inverse stable subordinator}

\author{Mark M. Meerschaert}
\address{Mark M. Meerschaert, Department of Statistics and Probability,
Michigan State University, East Lansing, MI 48823}
\email{mcubed@stt.msu.edu}
\urladdr{http://www.stt.msu.edu/$\sim$mcubed/}
\thanks{The research of MMM was partially supported by NSF grants DMS-0803360 and EAR-0823965.}

\author{Erkan Nane}
\address{Erkan Nane, Department of Mathematics and Statistics, 221 Parker Hall, Auburn University, Auburn, Alabama 36849, USA}
\email{nane@auburn.edu}

\author{P. Vellaisamy}
\address{P. Vellaisamy, Department of Mathematics,
 Indian Institute of Technology Bombay, Powai, Mumbai 400076, INDIA.}
\email{pv@math.iitb.ac.in}

\begin{abstract}
The fractional Poisson process is a renewal process with Mittag-Leffler waiting times.  Its distributions solve
a time-fractional analogue of the Kolmogorov forward equation for a Poisson process.  This paper shows that a
traditional Poisson process, with the time variable replaced by an independent inverse stable subordinator, is also a
 fractional Poisson process.  This result unifies the two main approaches in the stochastic theory of time-fractional
 diffusion equations.   The equivalence extends to a broad class of renewal processes that include models for tempered
  fractional diffusion, and distributed-order (e.g., ultraslow) fractional diffusion.  The paper also establishes an interesting connection between the fractional Poisson process and Brownian time.
\end{abstract}


\maketitle

\section{Introduction}

The fractional Poisson process (FPP) was introduced and studied
by Repin and Saichev \cite{RS00}, Jumarie \cite{Jumarie01}, Laskin \cite{Laskin03}, Mainardi et al.
\cite{MGS04, MGV07}, Uchaikin et al. \cite{UCS08} and Beghin and Orsingher \cite{BO09, BO10}.  The FPP is a
natural generalization of the usual Poisson process, with an
interesting connection to fractional calculus.  This renewal process has IID waiting
times $J_n$ that satisfy
\begin{equation}\label{MLwait}
\P(J_n>t)=E_{\beta}(-\lambda t^\beta)
\end{equation}
for $0<\beta\leq 1$, where
\begin{equation}\label{MLdef}
E_\beta(z)=\sum_{k=0}^{\infty}\frac{z^k}{\Gamma (1+\beta k)} \cdot
\end{equation}
denotes the Mittag-Leffler function.  When $\beta=1$, the waiting
times are exponential with rate $\lambda$, since $e^z=E_1(z)$. Let
$T_n=J_1+\cdots+J_n$ be the time of the $n$th jump.  Then the FPP
\begin{equation}\label{renewal-process}
N_\beta(t)=\max\{n\geq 0: T_n\leq t\}
\end{equation}
is a renewal process with Mittag-Leffler waiting times.

A compound FPP is
obtained by subordinating a random walk to the FPP.  The resulting
process is non-Markovian (unless $\beta=1$) and the distribution
of that process solves a ``master equation'' analogous to the
Kolmogorov equation for Markov processes, with the usual integer
order time derivative replaced by a fractional derivative.

The continuous time random walk (CTRW) is another useful model in fractional calculus.  Consider a CTRW whose IID
particle jumps $Y_n$
have PDF $w(x)$, and whose IID waiting times $(J_n)$ are
Mittag-Leffler variables independent of $(Y_n)$.  The
particle location after $n$ jumps is $S(n)=Y_1+\cdots+Y_n$, and the
CTRW $S(N_\beta(t))$ gives the particle location at time $t\geq
0$. Hilfer and Anton \cite{hilfer-anton95}  show that the PDF
$p(x,t)$ of the CTRW $S(N_\beta(t))$ solves the fractional master
equation
\begin{equation}\label{master-equation-1}
\partial_t^\beta p(x,t)=-\lambda p(x,t)+ \lambda\int_{-\infty}^\infty p(x-y,t)w(y)\,dy
\end{equation}
where $\partial_t^\beta$ denotes the Caputo fractional derivative.
The Caputo fractional derivative, defined for $0\leq n-1<\beta<n$
by
\begin{equation}\label{CaputoDef}
\partial_t^\beta g(t)=\frac{1}{\Gamma(n-\beta)}\int_0^t {(t-r)^{n-1-\beta}} g^{(n)}(r)dr,
\end{equation}
where $g^{(k)}$ denotes the $k$-th derivative of $g$, was invented
to properly handle initial values \cite{Caputo}.

If $\beta=1$, then $\partial_t^\beta$ is the usual first derivative.  The corresponding CTRW $S(N_1(t))$ is a
compound Poisson process, and \eqref{master-equation-1} reduces to
\begin{equation}\label{cpcp}
\partial_t p(x,t)=-\lambda p(x,t)+ \lambda \int_{-\infty}^\infty p(x-y,t)w(y)\,dy ,
\end{equation}
the Cauchy problem associated with this infinitely divisible L\'evy process.  Then a general result on Cauchy
problems \cite[Theorem 3.1]{fracCauchy} implies that the PDF of the time-changed process $S(N_1(E(t)))$ solves
the fractional Cauchy problem \eqref{master-equation-1}, where
\begin{equation}\label{EtDef}
E(t)=\inf\{r>0:D(r)>t\}
\end{equation}
is the right-continuous inverse (hitting time, first passage time) of $D(t)$, a standard $\beta$-stable subordinator
with $\E[e^{-sD(t)}]=e^{-ts^\beta}$ for some $0<\beta<1$.

Since the PDF of both $S(N_\beta(t))$ and $S(N_1(E(t)))$ solve the same governing equation \eqref{master-equation-1},
with the same point-source initial condition (i.e., both processes start at the origin), these two processes have the same one dimensional distributions.   Heuristically, the degenerate case $Y_n\equiv 1$ gives $S(n)=n$, which strongly suggest that the FPP $N_\beta(t)$ and the process $N_1(E(t))$ have the same one dimensional distributions.  We will call $N_1(E(t))$ the fractal time Poisson process (FTPP), since it comes from a self-similar time change (see, e.g., \cite[Proposition 3.1]{limitCTRW}).  In this paper, we will prove that the FPP and the FTPP are in fact the same process, by showing that the waiting times between jumps in the FTPP are IID Mittag-Leffler.  This strong connection between the FPP and the FTPP unifies the two main approaches in the stochastic theory of fractional diffusion.  For example, the FPP approach was used recently in the work of Behgin and Orsingher \cite{BO09}, while the inverse stable subordinator is a key ingredient in \cite{MNV}.

\section{Two equivalent formulations}
Recall that the fractional Poisson process (FPP) $N_\beta(t)$ is a
renewal process with Mittag-Leffler waiting times \eqref{MLwait},
and the fractal time Poisson process (FTPP) $N_1(E(t))$ is Poisson
process, with rate $\lambda>0$, time-changed via the inverse
stable subordinator \eqref{EtDef}. The proof that the FPP and the
FTPP are the same process requires the following simple lemma.

\begin{lemma}\label{Dlemma}
Let $D(t)$ be a strictly increasing right-continuous process with left-hand limits, and let $E(t)$ be its right-continuous inverse defined by \eqref{EtDef}.  Then
\begin{equation}\label{eq2.1}
D(r-)=\sup\{t>0:E(t)<r\}
\end{equation}
for any $r>0$.
\end{lemma}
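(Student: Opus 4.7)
The plan is to unwind both sides directly from the definition of $E$, reducing the identity to elementary properties of infima and suprema in $\R$. Nothing deep about $D$ is needed beyond monotonicity and the existence of left limits.

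First I would show that, for fixed $t$, the strict inequality $E(t)<r$ is equivalent to the existence of some $s\in(0,r)$ with $D(s)>t$. The ``only if'' direction uses that $\inf\{s>0:D(s)>t\}<r$ forces the set $\{s>0:D(s)>t\}$ to contain a point below any value strictly between the infimum and $r$; the ``if'' direction is immediate from the definition of $E$. This is really the only step where strict-versus-weak inequality bookkeeping is subtle, and it is the main (small) obstacle in the argument.

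Second, I would translate the existential into a supremum: the condition $\exists\, s\in(0,r)$ with $D(s)>t$ is the same as $t<\sup_{0<s<r}D(s)$. Because $D$ is monotone increasing with left-hand limits, the supremum on the right equals the left limit $D(r-)$. Combining the two equivalences yields
\begin{equation*}
\{t>0:E(t)<r\}=\{t>0:t<D(r-)\},
\end{equation*}
whose supremum is $D(r-)$, which is exactly \eqref{eq2.1}.

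Thus the proof is essentially a two-line chase through definitions. Strict monotonicity and right-continuity of $D$, which are assumed in the lemma, are comfortable but not strictly necessary for this identity: only monotonicity (to identify $\sup_{s<r}D(s)$ with $D(r-)$) and the definition of infimum are used. The payoff is that this clean formula for $D(r-)$ in terms of $E$ is exactly what one needs later to convert statements about waiting times of the Poisson process evaluated at $E(t)$ into statements about the jumps of $D$.
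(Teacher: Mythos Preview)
Your argument is correct, and it is a genuinely different (and cleaner) route than the one in the paper. The paper proves the two inequalities $t_0\leq D(r-)$ and $t_0\geq D(r-)$ separately, where $t_0=\sup\{t>0:E(t)<r\}$, by exploiting the composition identities $D(E(t))\geq t$ (from right-continuity of $D$) and $E(D(r))=r$ (from strict monotonicity of $D$), together with a sequence/contradiction argument. You instead identify the set $\{t>0:E(t)<r\}$ outright as the interval $(0,D(r-))$ via the equivalence $E(t)<r\iff\exists\,s\in(0,r)$ with $D(s)>t\iff t<\sup_{0<s<r}D(s)=D(r-)$, and then read off the supremum. Your observation that this uses only monotonicity and left limits is accurate: the paper's proof genuinely consumes both strict monotonicity and right-continuity, while yours does not, so your version is both shorter and slightly more general. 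The paper's approach does have the minor side benefit of recording the identities $D(E(t))\geq t$ and $E(D(r))=r$ along the way, but these are not used elsewhere in the paper.
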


\begin{proof}
Let $t_0=\sup\{t>0:E(t)<r\}$.  Then there exists a sequence of points $t_n\uparrow t_0$ such that $E(t_n)<r$ for all
 $n$.  Let $\varepsilon_n =r-E(t_n)>0$.  If $r>E(t)$ then, since $D(t)$ is strictly increasing, $D(r)>t$.
 Since $D(r)$ is right-continuous, it follows that $D(E(t))\geq t$ for all $t>0$.  Then we have
  $t_n\leq D(E(t_n))=D(r-\varepsilon_n)< D(r-)$.  Letting $n\to\infty$ shows that  $D(r-)\geq t_0$.

Since $D$ has left-hand limits, for any $r_n\uparrow r$ we have
$D(r_n)\to D(r-)$ as $n\to\infty$.  If $D(r-)> t_0$, then for some
$r_n<r$ we have $D(r_n)>t_0$.  Since $E(t)$ is nondecreasing and
continuous, this implies that $E(D(r_n))\geq r$, by definition of
$t_0$.  But, $E(D(r))=r$ for all $r>0$ implying that $r_n\geq r$,
which is a contradiction. Thus, \eqref{eq2.1} follows.
\end{proof}

\begin{theorem}\label{th1}
For any $0<\beta<1$, the FTPP $N_1(E(t))$ is also a FPP.  That is, the waiting times between jumps of the FTPP are
IID Mittag-Leffler.
\end{theorem}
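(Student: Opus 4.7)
Let $T_1<T_2<\cdots$ be the jump times of the standard rate-$\lambda$ Poisson process $N_1$, so that the increments $J_k=T_k-T_{k-1}$ (with $T_0=0$) are IID $\mathrm{Exp}(\lambda)$ and independent of $D$. My goal is to identify the $k$-th jump time $\tilde T_k$ of $N_1(E(t))$ as $D(T_k-)$, and then compute the Laplace transform of the differences $\tilde J_k=\tilde T_k-\tilde T_{k-1}$ directly.

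\emph{Step 1: locating the jumps of the FTPP.} Because $D$ is strictly increasing, its inverse $E$ is continuous and nondecreasing, with flat segments on intervals $[D(r-),D(r)]$ corresponding to jumps of $D$. Since $N_1$ only has unit jumps and $E$ is continuous, $N_1(E(\cdot))$ also has only unit jumps, occurring exactly at times $t$ where $E(t)$ first reaches some $T_k$; hence
\[
\tilde T_k=\inf\{t\geq 0:E(t)\geq T_k\}=\sup\{t\geq 0:E(t)<T_k\}.
\]
By Lemma~\ref{Dlemma} applied at $r=T_k$, this equals $D(T_k-)$. Moreover, since $(T_k)$ is independent of $D$ and $D$ has at most countably many jumps a.s., $T_k$ is almost surely not a jump time of $D$, so $\tilde T_k=D(T_k-)=D(T_k)$ a.s.

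\emph{Step 2: the increments are IID.} Write $\tilde J_k=D(T_k)-D(T_{k-1})$. Conditioning on the Poisson inter-arrivals $(J_k)$ and using the stationary independent increments of $D$ (which is independent of $(J_k)$), the $\tilde J_k$ are conditionally independent with $\tilde J_k\stackrel{d}{=}D(J_k)$ (interpreting $D(J_k)$ with $J_k$ treated as a deterministic time, then integrating). Hence unconditionally $(\tilde J_k)$ is IID.

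\emph{Step 3: Laplace transform and Mittag-Leffler identification.} Using the subordinator identity $\E[e^{-sD(t)}]=e^{-ts^\beta}$ and $J_k\sim\mathrm{Exp}(\lambda)$,
\[
\E[e^{-s\tilde J_k}]=\E\bigl[e^{-J_k s^\beta}\bigr]=\frac{\lambda}{\lambda+s^\beta}.
\]
Expanding $\lambda/(\lambda+s^\beta)$ as a series in $s^{-\beta}$ and inverting term by term (using $\int_0^\infty e^{-st}t^{\beta k-1}\,dt=\Gamma(\beta k)s^{-\beta k}$) yields the density $f(t)=-\frac{d}{dt}E_\beta(-\lambda t^\beta)$, which is exactly the Mittag-Leffler density corresponding to the survival function \eqref{MLwait}. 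Combined with Step~2, this shows the FTPP is a renewal process with Mittag-Leffler waiting times, completing the proof.

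\emph{Main obstacle.} The only delicate step is Step~1: $E$ has flat pieces, so one must use Lemma~\ref{Dlemma} together with the independence argument to pin down the jump times as $D(T_k)$ almost surely. Once this is done, the independence of $(\tilde J_k)$ and the Laplace-transform computation are routine applications of the Lévy property of $D$ and the memoryless computation for exponential $J_k$.
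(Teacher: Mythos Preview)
Your proof is correct and takes essentially the same approach as the paper: identify the FTPP jump times as $D(T_k-)=D(T_k)$ via Lemma~\ref{Dlemma} and the absence of fixed discontinuities, then use the independent stationary increments of $D$ together with the IID exponential inter-arrivals to show the waiting times are IID with Laplace transform $\lambda/(\lambda+s^\beta)$. The only cosmetic differences are that the paper verifies IID-ness by computing the joint Laplace transform of $(\tau_1,\ldots,\tau_n)$ and matching it to that of the FPP arrival times (rather than arguing directly by conditioning on $(J_k)$ as you do), and it identifies the Mittag-Leffler law via the known transform of $E_\beta(-\lambda t^\beta)$ and integration by parts rather than term-by-term series inversion.
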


\begin{proof}
Let $W_n$ be an IID sequence with $\P(W_n>t)=e^{-\lambda t}$ and
$V_n=W_1+\cdots +W_n$ so that the Poisson process
$N_1(t)=\max\{n\geq 0:V_n\leq t\}$.  Let
\begin{equation}\label{eq2.2}
\tau_n=\sup\{t>0:N_1(E(t))<n\}
\end{equation}
 denote the jump
times of the FTPP. This definition of the jump times takes into
account the fact that $E(t)$ has
 constant intervals corresponding to the jumps of the process $D(t)$.  Using the fact that $\{N_1(t)< n\}=\{V_n> t\}$ for the Poisson process, along with \eqref{eq2.2}, we have
\[\tau_n=\sup\{t>0:E(t)<V_n\} .\]
Then Lemma \ref{Dlemma} implies that $\tau_n=D(V_n-)$.  Define $X_1=\tau_1$ and $X_n=\tau_n-\tau_{n-1}$ for $n\geq 2$,
 the waiting times between jumps of the FTPP.  In order to show that the FTPP is an FPP, it suffices to show that $X_n$
  are IID Mittag-Leffler, i.e., they are IID with $J_n$.

Recall that the Laplace transform of the exponential distribution
$\E(e^{-sW_n})
={\lambda}/(\lambda+s)$.
Also recall that $\E(e^{-sD(t)})=e^{-ts^\beta}$. Since $D(t)$ is a
L\'evy process, it has no fixed points of discontinuity and hence
$D(t-),D(t)$ are identically distributed for all $t\geq 0$.
(Indeed, $D(t)=D(t-) ~a.s. $ \cite [Lemma 2.3.2]{appm}).

 Then a conditioning argument yields
\begin{equation}\begin{split}\label{eq2.3}
\E(e^{-s\tau_1})=\E(e^{-sD(W_1-)})&=\E\left[\E\left(e^{-sD(W_1-)}\big|W_1\right)\right]\\&=\E\left[\E\left(e^{-sD(W_1)}
\big|W_1\right)\right]= \E\left[e^{-W_1
s^\beta}\right]=\frac{\lambda}{\lambda+s^\beta} \cdot
\end{split}\end{equation}
Let $f_\beta(x)=\partial_x[1-E_\beta(-\lambda x^\beta)]$
be the Mittag-Leffler PDF of $J_n$.  It is well known that
\[\int_0^\infty e^{-sx} E_\beta(-\lambda x^\beta)\,dx=\frac{s^{\beta-1}}{\lambda+s^\beta},\]
see for example \cite[Eq.\ (3.4)]{MNV}.  Now integrate by parts to see that
\begin{equation}\begin{split}\label{T1LT}
\E(e^{-sT_1})&=\int_0^\infty e^{-sx} f_\beta(x)\,dx\\
&=\int_0^\infty s e^{-sx} \left(1-E_\beta(-\lambda x^\beta)\right)\,dx\\
&=s\left[\frac 1s - \frac{s^{\beta-1}}{\lambda+s^\beta}\right]
=\frac{\lambda}{\lambda+s^\beta}=\E(e^{-s\tau_1})
\end{split}\end{equation}
and then the uniqueness theorem for LT implies that $T_1,\tau_1$ are identically distributed.  In particular,
 $X_1$ has the same Mittag-Leffler distribution as $J_1$.

A straightforward extension of this argument shows that
$(T_1,\ldots,T_n)$ is identically distributed with
$(\tau_1,\ldots,\tau_n)$ for any positive integer $n$.  To ease
notation, we only write the case $n=2$.  First observe that
\[\E(e^{-s_1T_1}e^{-s_2T_2})=\E(e^{-s_1J_1}e^{-s_2(J_1+J_2)})= \frac{\lambda}{\lambda+(s_1+s_2)^\beta}
\cdot\frac{\lambda}{\lambda+s_2^\beta}, \] using the independence
of $J_1$ and $J_2$. Next write
\begin{equation*}\begin{split}
\E(e^{-s_1D(t_1)}e^{-s_2D(t_1+t_2)})
&=\E(e^{-s_1D(t_1)}e^{-s_2[D(t_1)+D(t_1+t_2)-D(t_1)]})\\
&=\E(e^{-(s_1+s_2)D(t_1)}e^{-s_2[D(t_1+t_2)-D(t_1)]})\\
&=e^{-t_1(s_1+s_2)^\beta}e^{-t_2s_2^\beta},
\end{split}\end{equation*}
using the fact that $D(t)$ has independent increments.  Then
\begin{equation*}\begin{split}
\E(e^{-s_1\tau_1-s_2\tau_2})
&=\E(e^{-s_1D(W_1-)-s_2D([W_1+W_2]-)})\\
&=\E\left[\E\left(e^{-s_1D(W_1)-s_2D(W_1+W_2)}\big|W_1,W_2\right)\right]\\
&=\E\left[e^{-W_1(s_1+s_2)^\beta}e^{-W_2s_2^\beta}\right]
=\frac{\lambda}{\lambda+(s_1+s_2)^\beta}\cdot\frac{\lambda}{\lambda+s_2^\beta}=\E(e^{-s_1T_1}e^{-s_2T_2}) .
\end{split}\end{equation*}
Now an application of the continuous mapping theorem shows that $(J_1,\ldots,J_n)$ is identically distributed
with $(X_1,\ldots,X_n)$ for any positive integer $n$.  Then $(X_n)$ is an IID sequence, so $N_1(E(t))$ is a renewal
process.
\end{proof}

\begin{remark}
Theorem \ref{th1} extends a result in Behgin and Orsingher \cite{BO09}.  They define (in our notation) a random variable $E(t)$ and show that the two random variables $N_\beta(t)$ and $N_1(E(t))$ have the same density function, by comparing their Laplace transforms.  They identify $E(t)$ only though its density function, which they express in terms of an integral involving the density of $D(t)$, see Remark \ref{BOremark} for more detail.   Cahoy, Uchaikin,  Woyczynski \cite{cahoy2010} also connect the Mittag-Leffler distribution with a stable law.  They note that (in our notation) $P(J_n>t)=E[\exp(-\lambda t^\beta/D(1)^\beta)]$, which is useful in simulations.  To connect this with our work, note that $E(t) = (t/D(1))^\beta$ in distribution (see Corollary 3.1 in \cite{limitCTRW}), so that $P(J_n>t)=E[\exp(-\lambda E(t))]$.  A result of Bingham \cite{bingham} shows that the the Laplace transform of the stable hitting time $E(t)$ is Mittag-Leffler, so that \eqref{MLwait} holds.
\end{remark}

\begin{remark}
The proof of Theorem \ref{th1} uses the fact that, if $D(t)$ is a $\beta$-stable subordinator and $W_1$ is exponential, then $D(W_1)$ has a Mittag-Leffler distribution.  This fact was first noticed by Pillai \cite{Pillai}, who showed that
$W_1^{1/\beta}D(1)$ is Mittag-Leffler.  These are equivalent because $D(t)$ is identically distributed with $t^{1/\beta}D(1)$.  This Mittag-Leffler distribution is also known as the positive Linnik law, e.g., see Huillet \cite{Huillet}.   It has the property of geometric stability:  A geometric random sum of  Mittag-Leffler variables is again Mittag-Leffler, e.g., see Kozubowski \cite{Kozubowski94}.
\end{remark}

Next we want to show that the FTPP $N_1(E(t))$, and hence also the
FPP $N_\beta(t)$, occurs naturally as a CTRW scaling limit.  This provides a further justification for the FPP as a robust physical model, see for example Laskin \cite{Laskin03}.
Suppose now that $\P(J_n>t)=t^{-\beta}L(t)$, where $0<\beta<1$ and
$L$ is slowly varying.  For example, this is true of the
Mittag-Leffler waiting times.  Then $J_1$ belongs to the strict
domain of attraction of some stable law $D$ with index
$0<\beta<1$, i.e., there exist $b_n>0$ such that
\begin{equation}\label{doabeta}
b_n(J_1+\dots+J_n)\Rightarrow D,
\end{equation}
where $D(1)=D>0$ almost surely, and  $\Rightarrow$ denotes convergence in distribution.  Let $b(t)=b_{[t]}$. Then
 $b(t)=t^{-1/\beta}L_0(t)$ for some slowly varying function $L_0(t)$ (e.g., see \cite[XVII.5]{feller}).  Since $b$
 varies regularly with index $-1/\beta$, $b^{-1}$ is
regularly varying with index $1/\beta>0$ and so by  \cite[Property
1.5.5]{seneta} there exists a regularly varying function
 $\tilde b$ with index $\beta$ such that $1/b(\tilde b(c))\sim c$, as $c\to\infty$. Here we use the notation $f\sim g$
 for positive functions $f,g$ if and only if $f(c)/g(c)\to 1$ as $c\to\infty$.  Let $T_n=J_1+\cdots+J_n$ and define a
 renewal process
\begin{equation}\label{RtDef}
R(t)=\max\{n\geq 0:T_n\leq t\}
\end{equation}
with these waiting times.  Next, construct a CTRW with iid
Bernoulli jumps $Y^{(p)}_n$ with $\P(Y^{(p)}_n=1)=p$ and
$\P(Y^{(p)}_n=0)=1-p$, independent of $(J_n)$.  Let
$S^{(p)}(n)=Y^{(p)}_1+\cdots+Y^{(p)}_n$, a binomial random
variable.  Then $S^{(p)}(R(t))$ is a CTRW with heavy tailed
waiting times and Bernoulli jumps.

\begin{theorem}\label{th2}
The FTPP is the process limit of a CTRW sequence:
\begin{equation}\label{CTRWtoFTPP}
\big\{S^{(1/\tilde b(c))}([\lambda R(ct)])\big\}_{t\geq 0}\Rightarrow \big\{N_1(E(t))\big\}_{t\geq 0}
\end{equation}
as $c\to\infty$ in the $M_1$ topology on $D([0,\infty),\rr)$.
\end{theorem}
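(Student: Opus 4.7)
The strategy is to decompose the convergence into two independent pieces: (a) a heavy-tailed renewal scaling limit for $R(ct)$, which brings in the inverse stable subordinator $E$; and (b) a Poisson approximation for the Bernoulli counting process $S^{(p)}$, which brings in the rate-$\lambda$ Poisson process $N_1$. Since $(J_n)$ and $(Y^{(p)}_n)$ are independent, the two limits can be made joint on the product Skorokhod space, and the final statement will follow by a continuous-mapping theorem for composition in the $M_1$ topology.

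First, I would establish the renewal scaling limit
\[
\tilde b(c)^{-1} R(ct) \Longrightarrow E(t)\qquad(c\to\infty)
\]
in $D([0,\infty),\R)$. The hypothesis \eqref{doabeta} together with the functional invariance principle gives $b(c)T_{[cu]}\Rightarrow D(u)$ in $J_1$, and since $D$ is strictly increasing with continuous inverse $E$, a first-passage-inverse argument (see, e.g., \cite{limitCTRW}) converts this into $J_1$ (and hence $M_1$) convergence of the renewal process, with the correct normalization being $\tilde b(c)$ precisely because $1/b(\tilde b(c))\sim c$.

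Next, writing $p_c=1/\tilde b(c)\to 0$, I would show that the Bernoulli processes
\[
\widetilde N_c(u) := S^{(p_c)}\!\bigl([u\tilde b(c)]\bigr)
\]
converge in $J_1$ to a rate-$1$ Poisson process $\widetilde N(u)$. This is the classical Poisson approximation (binomial with $n\to\infty$, $p\to 0$, $np\to u$, the number of jumps in disjoint intervals being independent) and fits into the standard tightness + finite-dimensional-convergence scheme. Since $N_1$ has rate $\lambda$, one has $N_1(t)\stackrel{d}{=}\widetilde N(\lambda t)$.

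Finally, the key algebraic rewriting is
\[
S^{(1/\tilde b(c))}\bigl([\lambda R(ct)]\bigr) \;=\; \widetilde N_c\!\left(\frac{[\lambda R(ct)]}{\tilde b(c)}\right),
\]
whose inner argument converges in $M_1$ to $\lambda E(t)$ by the first step, and whose outer sequence converges in $J_1$ to $\widetilde N$ by the second step. Using joint convergence (available thanks to independence of the driving randomness), a composition theorem in $M_1$ (e.g.\ Whitt, \emph{Stochastic-Process Limits}, Thm.\ 13.2.4) yields
\[
S^{(1/\tilde b(c))}\bigl([\lambda R(ct)]\bigr)\Longrightarrow \widetilde N(\lambda E(t)) \stackrel{d}{=} N_1(E(t)) .
\]
The main obstacle is this composition step: composition is notoriously \emph{not} jointly continuous on $D([0,\infty),\R)$ in the $M_1$ topology. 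It does go through here because the outer time-change $\lambda E$ is almost surely continuous and nondecreasing, and the inner limit $\widetilde N$ has only unit jumps whose locations are a.s.\ disjoint from any preimage of a plateau of $E$; these are precisely the hypotheses of the composition theorem. The small discrepancy between $\lambda R(ct)/\tilde b(c)$ and $[\lambda R(ct)]/\tilde b(c)$ is of order $1/\tilde b(c)\to 0$ and is absorbed without affecting the $M_1$ limit.
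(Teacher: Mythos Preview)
Your argument is correct and follows the same route as the paper: a renewal scaling limit $\tilde b(c)^{-1}R(ct)\Rightarrow E(t)$ from \cite{limitCTRW}, a Poisson approximation for the scaled Bernoulli sums, joint convergence on the product space via independence, and then Whitt's composition theorem \cite[Theorem~13.2.4]{Whittbook} exploiting that the limiting time change is continuous and nondecreasing. The only cosmetic difference is that you absorb the rate $\lambda$ into the time change (working with a rate-$1$ Poisson limit $\widetilde N$ and inner argument $\lambda E$), whereas the paper keeps $\lambda$ in the outer process and composes $N_1$ directly with $E$; these are equivalent since $\widetilde N(\lambda\,\cdot)\stackrel{d}{=}N_1(\cdot)$. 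One small clean-up: in your final paragraph you call $\lambda E$ the ``outer time-change'' after having (correctly) called it the inner argument---the relevant hypothesis of \cite[Theorem~13.2.4]{Whittbook} is simply that the inner time change $\lambda E$ is a.s.\ continuous and nondecreasing, and the extra remark about jump locations of $\widetilde N$ being disjoint from plateaus of $E$ is not needed for the $M_1$ statement.
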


\begin{proof}
Since the sequence $(J_n)$ is in the strict domain of attraction of a $\beta$-stable random variable $D$,
 \cite[Corollary 3.4]{limitCTRW} shows that
\[ \big\{\tilde b(c)^{-1}R(ct)\big\}_{t\geq 0}\Rightarrow\big\{E(t)\big\}_{t\geq 0}\quad\text{as $c\to\infty$.}\]
in the Skorokhod $J_1$ topology, where $D(t)$ is the stable subordinator with $D(1)=D$, and $E(t)$ is given by
 \eqref{EtDef}.

Since the binomial random variable $S^{(p)}(n)$ has LT $\E(e^{-sS^{(p)}(n)})=(1+(e^{-s}-1)p)^n$ for any $n\geq 0$, it
 follows that
\[\E(e^{-sS^{(p)}([\lambda t/p])})=(1+(e^{-s}-1)p)^{[\lambda t/p]}\to \exp(-\lambda
t(1-e^{-s})),\]
as $p\to 0$, using the fact that $(1+ap)^{1/p}\to e^a$ as $p\to 0$.  It follows by the continuity theorem for LT that
 $S^{(p)}([\lambda t/p])\Rightarrow N_1(t)$ for any $t>0$, since $\exp(-\lambda t(1-e^{-s}))$ is the LT of the Poisson
  random variable $N_1(t)$.  Then a standard argument (e.g., see \cite[Example 11.2.18]{RVbook} shows that we also get
\[ \big\{S^{(p)}([\lambda t/p])\big\}_{t\geq 0}\convfd \big\{N_1(t)\big\}_{t\geq 0}
,\]
as $p\to 0$, where $\convfd$ denotes convergence of all finite
dimensional distributions.  Since the sample paths of
 $S^{(p)}([\lambda t/p])$ are increasing and $N_1(t)$ is continuous in probability, being a L\'evy process, $J_1$
 convergence follows using \cite[Theorem 3]{bingham}.

Since the CTRW waiting times $(J_n)$ are independent of the jumps $(Y^{(p)}_n)$, and since $1/\tilde b(c)\to 0$ as
 $c\to\infty$, it follows that
$$
(S^{(1/\tilde b(c))}([\lambda t \tilde b(c)]),\tilde b(c)^{-1}R(ct))\Rightarrow (N_1(t),E(t))
$$
in the $J_1$ topology of the product space $D([0,\infty),\rr\times \rr)$, by \cite[Theorem 3.2]{billingsley}.
Since the process $E(t)$ is nondecreasing and continuous, \cite[Theorem 13.2.4]{Whittbook} along with the continuous mapping theorem yields
\[S^{(1/\tilde b(c))}([\lambda R(ct)])=S^{(1/\tilde b(c))}([\lambda\cdot \tilde b(c)^{-1} R(ct)\cdot  \tilde b(c)])\Rightarrow N_1(E(t))\]
in the $M_1$ topology on $D([0,\infty),\rr)$.
\end{proof}

\begin{remark}
For the specific case of Mittag-Leffler waiting times, where $\P(J_n>t)=E_{\beta}(-t^\beta)$, we can take $b_n=n^{-1/\beta}$ in \eqref{doabeta}.  To check this, note that
\begin{equation*}\begin{split}
\E(e^{-sb_nT_n})&=\left(\frac{1}{1+(sb_n)^\beta}\right)^n
=\left(1-\frac{s^\beta}{n+s^\beta}\right)^n
\rightarrow e^{-s^\beta}=\E(e^{-sD(1)})
\end{split}\end{equation*}
as $n\to\infty$.
Then $\tilde b(c)=c^\beta$ and the CTRW convergence
\eqref{CTRWtoFTPP} reduces to $S^{(c^{-\beta})}([\lambda
R(ct)])\Rightarrow N_1(E(t))$ as $c\to\infty$.  Substitute
$p=c^{-\beta}$ to get
\begin{equation}
S^{(p)}([\lambda R(p^{-1/\beta}t)])\Rightarrow N_1(E(t)),
\quad\text{as $p\to 0$.}
\end{equation}
\end{remark}

\section{Fractional calculus}\label{FCsec}
This section develops some interesting connections between the fractional Poisson process and fractional calculus.  In the process, some apparent inconsistencies in the existing literature will be explained.
Behgin and Orsingher \cite[Eq.\ (2.17)]{BO09} show that the FPP of order $0<\beta<1$ has distribution
\begin{equation}\label{BOpmf}
\P(N_\beta(t)=k)=\int_0^\infty e^{-\lambda x}\frac{(\lambda
x)^k}{k!} V(x,t)\,dx,
\end{equation}
where $V(x,t)$ is a ``folded PDF'' defined on $x>0$, for each
$t>0$, by $V(x,t)=2v(x,t)$, and $v(x,t)$ is another PDF with
$x\in\rr$ for each $t>0$ that solves
\begin{equation}\begin{split}\label{BOfde}
\partial_t^{2\beta} v(x,t)&=\partial_x^2 v(x,t);\\
v(x,0)&=\delta(x);\\
\partial_t v(x,0)&\equiv 0, \quad\text{if $1/2<\beta<1$.}
\end{split}\end{equation}
It is also stated in \cite[Eq.\ (1.9)]{BO09} that the FPP
$N_\beta(t)=N_1(T_t)$, where $T_t$ is a random process with PDF
$V(x,t)$ for $t>0$. However, that process is identified only in
terms of its one dimensional distributions (PDF).  Theorem
\ref{th1} shows that the inverse stable subordinator $E(t)$ is one
such process.

On the other hand, a simple conditioning argument shows that the
equivalent FTPP process has distribution
\begin{equation}\label{MMMpmf}
\P(N_1(E(t))=k)=\int_0^\infty P(N_1(x)=k)h(x,t)\,dx=\int_0^\infty e^{-\lambda x}\frac{(\lambda x)^k}{k!}h(x,t)\,dx
\end{equation}
where $h(x,t)$ is the density of $E(t)$, a PDF on $x>0$ for each $t>0$.  It follows from \cite[Theorem 4.1]{triCTRW}
that this PDF solves
\begin{equation}\label{MMMfde}
\partial_t^\beta h(x,t)=-\partial_x h(x,t);\quad\text{$h(x,0)=\delta(x)$.}
\end{equation}
In view of Theorem \ref{th1}, the two distributions \eqref{BOpmf} and \eqref{MMMpmf} must be equal.  Thus, the main
purpose of this section is to reconcile the two fractional differential equations \eqref{BOfde} and \eqref{MMMfde}.

\begin{theorem}\label{th3}
Let $N_\beta(t)$ be a fractional Poisson process \eqref{renewal-process} with $0<\beta<1$, so that \eqref{BOpmf} holds.  Let $N_1(E(t))$ be the equivalent fractal time Poisson process, where $E(t)$ is the standard inverse $\beta$-stable subordinator with PDF $h(x,t)$, so that \eqref{MMMpmf} holds.  Then
\begin{equation}\label{BOeqMMM}
h(x,t)=2v(x,t) \quad\text{for all $x>0$ and $t>0$.}
\end{equation}
In particular, the two fractional partial differential equations \eqref{BOfde} and \eqref{MMMfde} are consistent, in
the sense that the folded solution $V(x,t)=2v(x,t)$ to \eqref{BOfde} coincides with the solution $h(x,t)$
to \eqref{MMMfde}.
\end{theorem}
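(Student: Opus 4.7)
The plan is to leverage Theorem \ref{th1} to equate the one-dimensional distributions of $N_\beta(t)$ and $N_1(E(t))$, and then extract a Laplace transform identity that pins down $h(\cdot,t)$ and $2v(\cdot,t)$. Since the Laplace transform uniquely determines an integrable function on $(0,\infty)$, this will yield \eqref{BOeqMMM} directly, with no need to wrestle with the PDEs themselves.

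First, I would fix $t>0$ and $\lambda>0$ and combine the two representations. By Theorem \ref{th1}, $N_\beta(t)$ and $N_1(E(t))$ are identically distributed, so \eqref{BOpmf} and \eqref{MMMpmf} imply
\begin{equation*}
\int_0^\infty e^{-\lambda x}\,\frac{(\lambda x)^k}{k!}\,\bigl[2v(x,t)-h(x,t)\bigr]\,dx \;=\; 0
\end{equation*}
for every integer $k\geq 0$. Rather than exploiting the full family in $k$, I would specialize to $k=0$, which collapses the identity to
\begin{equation*}
\int_0^\infty e^{-\lambda x}\bigl[2v(x,t)-h(x,t)\bigr]\,dx \;=\; 0
\qquad\text{for every }\lambda>0.
\end{equation*}
In other words, the functions $2v(\cdot,t)$ and $h(\cdot,t)$ (both integrable on $(0,\infty)$, being densities) have the same Laplace transform on the positive real axis.

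Second, I would invoke uniqueness of the Laplace transform for integrable functions on $(0,\infty)$ to conclude that $2v(x,t)=h(x,t)$ for almost every $x>0$, at each fixed $t>0$. Since both $h(x,t)$ and $v(x,t)$ are continuous in $x$ on $(0,\infty)$ for every $t>0$ (the density of the inverse stable subordinator is smooth on the positive axis, and $v$ is a PDF associated with the solution of \eqref{BOfde}), this almost-everywhere identity upgrades to pointwise equality, which is \eqref{BOeqMMM}. The final assertion about the consistency of the two fractional PDEs \eqref{BOfde} and \eqref{MMMfde} is then immediate.

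The main (and rather mild) obstacle is the temptation to prove the equality by matching Caputo derivatives across \eqref{BOfde} and \eqref{MMMfde}; that route is delicate because the orders of the time derivatives differ. Observing instead that the $k=0$ term of the Poisson mixture already packages a Laplace transform, so that Theorem \ref{th1} plus Laplace uniqueness suffices, sidesteps this difficulty entirely and reduces the proof to a short verification.
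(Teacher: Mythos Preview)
Your argument is correct, but it follows a different route from the paper's proof.  The paper works with the Laplace transform in the \emph{time} variable: it imports the explicit formulas $\tilde v(x,s)=\tfrac12 s^{\beta-1}e^{-|x|s^\beta}$ from Mainardi and $\tilde h(x,s)=s^{\beta-1}e^{-xs^\beta}$ from \cite{triCTRW}, observes these agree for $x>0$, and invokes LT uniqueness in $t$ (using continuity in $t$).  It then goes on to compute the Fourier--Laplace transforms and verify that \eqref{BOfde} and \eqref{MMMfde} are indeed recovered.  Your approach instead extracts a Laplace transform in the \emph{spatial} variable $x$: setting $k=0$ in the two mixture formulas and letting the Poisson rate $\lambda$ range over $(0,\infty)$ gives equality of $\int_0^\infty e^{-\lambda x}\,2v(x,t)\,dx$ and $\int_0^\infty e^{-\lambda x}\,h(x,t)\,dx$ for all $\lambda>0$, whence \eqref{BOeqMMM} by LT uniqueness in $x$.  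Your route is more internal to the paper---it leans on Theorem~\ref{th1} rather than on two external transform identities---and is pleasingly short; it does, however, depend on the (true but tacit) fact that neither $v$ nor $h$ involves the parameter $\lambda$, which is what makes varying $\lambda$ legitimate.  The paper's route, by contrast, produces the explicit FLT expressions \eqref{BOFLT} and \eqref{MMMFLT} along the way, which it then uses to re-derive and reconcile the two governing equations.
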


\begin{proof}
Mainardi \cite[Eq.\ (3.2)]{Mainardi96} shows that the solution to the fractional diffusion-wave equation \eqref{BOfde}
has LT
\begin{equation}\label{BOLT}
\tilde v(x,s)=\int_0^\infty e^{-st}v(x,t)dt=\frac 12 s^{\beta-1}e^{-|x|s^\beta}
\end{equation}
while \cite[Eq. (3.13)]{triCTRW} shows that
\begin{equation}\label{MMMLT}
\tilde h(x,s)=s^{\beta-1}e^{-xs^\beta} .
\end{equation}
Since both are differentiable in $t$, they are also continuous, so LT uniqueness for continuous functions implies
\eqref{BOeqMMM}.

Take Fourier transforms in \eqref{BOLT} to see that the solution to \eqref{BOfde} has Fourier-Laplace transform (FLT)
\begin{equation}\label{BOFLT}
\bar v(k,s)=\int_0^\infty e^{-st}\int_{-\infty}^\infty e^{-ikx}v(x,t)dxdt =\frac{s^{2\beta-1}}{s^{2\beta}+k^2} ,
\end{equation}
where we have used the fact that $e^{-a|x|}$ has FT $2a/(a^2+k^2)$.  Rearrange to get
\[s^{2\beta} \bar v(k,s)-s^{2\beta-1}=-k^2 \bar v(k,s) \]
and invert the FT to get
\[s^{2\beta} \tilde v(x,s)-s^{2\beta-1} v(x,0)=\partial_x^2 \tilde v(x,s),
\]
using the fact that $\partial_x f(x)$ has FT $(ik)\hat f(k)$ and $v(x,0)=\delta(x)$ has FT $\hat v(k,0)\equiv 1$.
To invert the LT, note that the Caputo fractional derivative $\partial_t^\beta f(t)$ has
LT $s^\beta \tilde f(s)-s^{\beta-1}f(0)$ if $0<\beta\leq 1$, and LT $s^\beta \tilde
f(s)-s^{\beta-1}f(0)-s^{\beta-2}f'(0)$ if $1<\beta\leq 2$.  This is easy to verify from the definition
\eqref{CaputoDef}, using the corresponding formula for the integer derivative, along with the fact that $s^{\beta-1}$
is the LT of $t^{-\beta}/\Gamma(1-\beta)$.  Now use the remaining initial condition $\partial_t v(x,0)\equiv 0$
for $1/2<\beta<1$ to invert the LT, and arrive at \eqref{BOfde}.

Likewise, the solution to \eqref{MMMfde} has FLT
\begin{equation}\label{MMMFLT}
\bar h(k,s)=\int_0^\infty e^{-ikx} s^{\beta-1}e^{-xs^\beta} dx =\frac{s^{\beta-1}}{s^\beta+ik} ,
\end{equation}
using the fact that $e^{ax}I(x\geq 0)$ has FT $1/(a+ik)$.  To see that these are consistent, compute the FLT of $h(|x|,t)$:
\begin{equation*}\begin{split}
\int_0^\infty e^{-st}\int_{-\infty}^\infty e^{-ikx} h(|x|,t)dxdt
&=\int_0^\infty e^{-st}\left(\int_0^\infty e^{-ikx} h(x,t)dx+\int_0^\infty e^{ikx} h(x,t)dx\right)dt\\
&=\frac{s^{\beta-1}}{s^\beta+ik}+\frac{s^{\beta-1}}{s^\beta-ik}
=2\left(\frac{s^{2\beta-1}}{s^{2\beta}+k^2}\right)
=2\bar v(k,s) .
\end{split}\end{equation*}
Invert the FLT to see that $h(|x|,t)=2v(x,t)$ for all $x\in\rr$ and $t>0$.  To verify the LFT solution, take FT in \eqref{MMMfde} to get
\[\partial_t^\beta \hat h(k,t)=-ik\, \hat h(x,t) \]
and apply the LT to get $s^\beta \bar h(k,s)-s^{\beta-1}=-ik\, \bar h(k,s)$, using the point source initial condition $\hat h(k,0)\equiv 1$.
\end{proof}

\begin{remark}\label{BOremark}
Behgin and Orsingher \cite[Eq.\ (2.21)]{BO09} show that
\[v(x,t)=\frac 1{2\Gamma(1-\beta)}\int_0^t (t-w)^{-\beta}p(|x|,t)dx\]
where $p(x,t)$ is the density of the stable subordinator $D(t)$, while \cite[Theorem 3.1]{triCTRW} implies that
\[h(x,t)=\frac 1{\Gamma(1-\beta)}\int_0^t (t-w)^{-\beta}p(x,t)dx .\]
This gives another proof that $h(|x|,t)=2v(x,t)$.
\end{remark}

\begin{remark}\label{FPPpmf}
A closed form expression for the probability mass function $p(n,t)=\P(N_1(E(t))=n)=P(N_\beta(t)=n)$ follows easily from \eqref{MMMpmf}.  Use \cite[Eq. (3.13)]{triCTRW} to write
\[\tilde p(n,s)=\int e^{-st}p(n,t)dt=\int_0^\infty e^{-\lambda x}\frac{(\lambda x)^n}{n!}s^{\beta-1}e^{-xs^\beta}\,dx \]
and use the formula for the gamma density to compute
\begin{equation}\label{pnsLT}
\tilde p(n,s)=\frac{s^{\beta-1}}{\lambda+s^\beta}\frac{\lambda^n}{(\lambda +s^\beta)^n} .
\end{equation}
Invert using the generalized Mittag-Leffler function
\[E^\gamma_{\alpha,\theta}(z)=\sum_{r=0}^\infty\frac{(\gamma)_r z^r}{r!\Gamma(\alpha r+\theta)}\]
where $(\gamma)_r=\gamma (\gamma+1)\cdots(\gamma+r-1)$ is the Pochammer Symbol.  Formula (2.5) of \cite{prabhakar} gives
$$
 \int_0^\infty e^{-st} t^{\gamma-1}E^{\delta}_{\nu,\gamma}(\omega t^{\nu})dt=\frac{s^{\nu \delta-\gamma}}{(s^{\nu}-\omega)^{\delta}} .
 $$
Substitute $\nu=\beta$, $\delta=n+1$ and $\gamma=\beta n +1$ to get
\begin{equation}\label{pntPMF}
 p(n,t)=(\lambda t^{\beta})^n E^{n+1}_{\beta, \beta n+1}(-\lambda t^{\beta})=\frac{(\lambda t^{\beta})^n}{n!}\sum_{r=0}^\infty\frac{(n+r)!}{r!}\frac{(-\lambda t^\beta)^r}{\Gamma(\beta (r+n)+1)}
\end{equation}
which is the same form obtained by Jumarie \cite{Jumarie01}, Laskin \cite{Laskin03},  Beghin and Orsingher \cite{BO09, BO10} and Cahoy \cite{cahoy} using different methods.
\end{remark}

The equivalence in Theorem \ref{th3} results from folding the solution to the fractional diffusion-wave equation
\eqref{BOfde}.  Another fractional partial differential equation for the density $h(x,t)$ of the standard inverse
$\beta$-stable subordinator $E(t)$, which is closer to the form \eqref{BOfde}, can be obtained by arguments similar
to those used in \cite{bmn-07} to connect the inverse stable subordinator to iterated Brownian motion.  In that theory,
it is customary to avoid distributions by imposing a functional initial condition.

\begin{theorem}\label{heat-type-pde-1}
Let $E(t)$ be the standard inverse $\beta$-stable subordinator
with density $h(x,t)$. Then for any $f\in L_2(\rr)\cap C^1(\rr)$,
the function
\begin{equation}\label{uxtdef}
u(x,t)=\E_x[f(E(t))]=\int_0^\infty f(x+y)h(y,t)dy
\end{equation}
solves the fractional differential equation
\begin{equation}\label{BTh2}
\partial_t^{2\beta} u(x,t) = -\partial_x f(x)
\frac{t^{\beta-1}}{\Gamma(1-\beta)}+ \partial_x^{2}u(x,t); \quad u(x,0) = f(x).
\end{equation}
In particular, when $\beta=1/2$, \eqref{uxtdef} solves
\begin{equation}\label{BTh}
\partial_t u(x,t) =
\frac{-\partial_x f(x)}{\sqrt{\pi t}}+ \partial_x^{2}u(x,t); \quad u(0,x) = f(x) ,
\end{equation}
and in this case we also have $u(x,t)=\E_x[f(|B(t)|)]$, where
$B(t)$ is a Brownian motion with variance $2t$.
\end{theorem}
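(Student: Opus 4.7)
The plan is to apply the Laplace transform in $t$, use the explicit form of $\tilde h(y,s)$ together with two integrations by parts in $y$, invert the transform, and then handle the $\beta=1/2$ identification separately. This parallels the method used in \cite{bmn-07}.

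First I would substitute $\tilde h(y,s) = s^{\beta-1}e^{-ys^\beta}$ from \eqref{MMMLT} and apply Fubini (valid because $f\in L_2(\rr)\cap C^1(\rr)$) to obtain
\[\tilde u(x,s) = s^{\beta-1}\int_0^\infty f(x+y)\,e^{-ys^\beta}\,dy.\]
Differentiating once in $x$ and integrating by parts in $y$ (the boundary term $f(x+y)e^{-ys^\beta}$ vanishes as $y\to\infty$) gives $\partial_x\tilde u = s^\beta\tilde u - s^{\beta-1}f(x)$; iterating the argument yields
\[\partial_x^2\tilde u(x,s) = s^{2\beta}\tilde u(x,s) - s^{2\beta-1}f(x) - s^{\beta-1}f'(x).\]

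Next I would recognize $s^{2\beta}\tilde u(x,s) - s^{2\beta-1}f(x)$ as the Laplace transform of the Caputo derivative $\partial_t^{2\beta}u$ under the initial condition $u(x,0)=f(x)$, modulo a $-s^{2\beta-2}u_t(x,0)$ correction that appears only for $\beta>1/2$. Rearranging yields $\widetilde{\partial_t^{2\beta}u}(x,s) = \partial_x^2\tilde u + s^{\beta-1}f'(x)$, and inverting via $\mathcal L^{-1}[s^{\beta-1}](t) = t^{-\beta}/\Gamma(1-\beta)$ produces the forcing term appearing in \eqref{BTh2}.

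For the $\beta=1/2$ identification, I would invert $\tilde h(x,s)=s^{-1/2}e^{-x\sqrt{s}}$ using the classical formula $\mathcal L^{-1}[s^{-1/2}e^{-a\sqrt s}](t)=(\pi t)^{-1/2}e^{-a^2/(4t)}$ to get $h(x,t)=(\pi t)^{-1/2}e^{-x^2/(4t)}$ for $x>0$. This matches the density of $|B(t)|$ when $B$ is a Brownian motion with $\mathrm{Var}\,B(t)=2t$, so $E(t)\stackrel{d}{=}|B(t)|$ and hence $u(x,t)=\E[f(x+|B(t)|)]=\E_x[f(|B(t)|)]$.

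The main obstacle is the treatment of $u_t(x,0)$ when $1/2<\beta<1$. Because $\E[E(t)]\sim t^\beta/\Gamma(1+\beta)$ as $t\downarrow 0$, $\partial_t u(x,0^+)$ is typically divergent like $t^{\beta-1}$, so the Caputo formulation of $\partial_t^{2\beta}$ is delicate; one must either impose a weak compatibility condition on $u_t(x,0)$ or reinterpret $\partial_t^{2\beta}$ in the Riemann--Liouville sense so that the singular contribution is absorbed into the right-hand side.
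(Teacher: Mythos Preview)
Your approach is correct and close in spirit to the paper's, which also works in transform space but takes the Fourier transform in $x$ as well: it computes $\bar u(k,s)=s^{\beta-1}\hat f(k)/(s^\beta+ik)$, multiplies numerator and denominator by $s^\beta-ik$, and reads off $s^{2\beta}\bar u - s^{2\beta-1}\hat f(k)=-ik\hat f(k)s^{\beta-1}-k^2\bar u$, which inverts to \eqref{BTh2}. Your two integrations by parts are the real-variable equivalent of that algebraic manipulation, so the arguments are interchangeable; yours has the mild advantage of not needing $\hat f$ to make sense. The $\beta=1/2$ identification differs more: you invert $\tilde h(x,s)=s^{-1/2}e^{-x\sqrt s}$ and match densities, while the paper argues probabilistically that the Brownian first-passage process is the $1/2$-stable subordinator, hence $E(t)=\sup_{r\le t}B(r)\stackrel d=|B(t)|$ by reflection, and then re-derives \eqref{BTh} by recognizing $-\partial_x$ as the generator of the drift semigroup and citing \cite[Corollary~3.4]{bmn-07}. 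Your route is faster; the paper's exposes the pathwise identity $E(t)=\sup_{r\le t}B(r)$. Your flagged obstacle concerning $u_t(x,0)$ for $1/2<\beta<1$ is legitimate---the paper simply writes ``which inverts to \eqref{BTh2}'' and does not supply a second initial condition, implicitly treating the equation in the mild (transform) sense.
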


\begin{proof}
From \eqref{MMMFLT}, we have
\begin{equation*}\begin{split}
\bar u(k,s)&=\frac{s^{\beta-1}\hat f(k)}{s^{\beta}+ik}
=\frac{s^{\beta-1}\hat f(k)}{s^{\beta}+ik}\cdot\frac{s^{\beta}-ik}{s^{\beta}-ik}
=\frac{s^{2\beta-1}-iks^{\beta-1}}{s^{2\beta}+k^2}\, \hat f(k)
\end{split}\end{equation*}
so that $s^{2\beta} \bar u(k,s)-s^{2\beta-1}\hat f(k)=-ik \hat
f(k) s^{\beta-1} -k^2 \bar u(k,s)$, which inverts to \eqref{BTh2}.
It is well known that the Brownian motion first passage time
$D(y)=\inf\{t>0:B(t)>y\}$ is a stable subordinator with index
$\beta=1/2$ \cite[Example 1.3.19]{appm}.
Then it is easy to see that
\[E(t)=\inf\{y>0:D(y)>t\}=\sup\{B(r):0\leq r\leq t\}\]
and this recovers the fact, typically proven using the reflection principle, that
\[P(E(t)> y)=2P(B(t)> y).\]
Then $E(t)$ and $|B(t)|$ have the same one dimensional distributions, so we also have $u(x,t)=\E_x[f(|B(t)|)]$.  Note that $X(t)=X(0)-t$ is {\it a fortiori} a continuous Markov process associated with the shift semigroup $T(t)f(x)=\E_x[f(X(t))]=f(x-t)$ with generator
\[L_xf(x)=\lim_{t\to 0+}\frac {T(t)f(x)-f(x)}{t}
=-\partial_x f(x) .\]
Then \cite[Corollary 3.4]{bmn-07} implies that $u(x,t)$ solves the equation
\[
\partial_t u(x,t) =
\frac{{L_x}f(x)}{\sqrt{\pi t}}+ {L_x}^{2}u(x,t); \quad u(0,x) = f(x) .
\]
When $L_x=-\partial_x$, this reduces to \eqref{BTh}, a special case of \eqref{BTh2} with $\Gamma(1/2)=\sqrt{\pi}$.
\end{proof}

\begin{remark}
In the case $f(x)=\delta(x)$, Theorem \ref{heat-type-pde-1} gives an alternative governing equation for $h(x,t)$.
Note that \eqref{BTh2} is very similar to the governing equation \eqref{BOfde} for the unfolded PDF.
\end{remark}

\begin{remark}
The process $|B(t)|$ in Theorem \ref{heat-type-pde-1} is not the
same process as the inverse $1/2$-stable subordinator $E(t)$ in
Theorem \ref{th1}, although they have the same one dimensional
distributions.  Hence, the FTPP is not the same as the Brownian
time Poisson process $N_1(|B(t)|)$.  However, we do have
$E(t)=\sup\{B(r):0\leq r\leq t\}$, so that a Poisson process
subordinated to the supremum of a Brownian motion is an FPP with
$\beta=1/2$.
\end{remark}

\begin{remark}
Let $E(t)$ be the standard inverse stable subordinator of index $\beta=1/m$ for integer $m>1$. Then
 \cite[Remark 3.11]{bmn-07}, \cite[Theorem 1.1]{nane} and Keyantuo and Lizama \cite[Theorem 3.3]{lizama-keyantuo}
 imply that $u(x,t)=\E_x[f(E(t))]$ solves
\[
\partial_t u(x,t) =
\sum_{j=1}^{m-1} \frac{t^{j/m-1}}{\Gamma(j/m)} (-\partial_x)^j
f(x) + (-\partial_x)^m u(x,t);\quad   u(0,x) =  f(x),
\]
for $t>0$ and $x\in \rr$,  which is then equivalent to
\eqref{MMMfde}.  The proof is similar to Theorem
\ref{heat-type-pde-1}. For example, when $\beta=1/3$ use
\begin{equation*}\begin{split}
\bar{u}(s,k)
&=\frac{s^{-2/3}\hat{f}(k)}{s^{1/3}+ik}\cdot
\frac{s^{2/3}-s^{1/3}ik+k^2}{s^{2/3}-s^{1/3}ik+k^2}
= \frac{1-s^{-1/3}ik+s^{-2/3}k^2 }{s+ik^3}\,\hat{f}(k) .
\end{split}\end{equation*}
\end{remark}

\section{Renewal processes and inverse subordinators}\label{sec4}
Theorem \ref{th1} shows that a Poisson process, time-changed by an inverse stable subordinator, yields a renewal process
 with Mittag-Leffler waiting times.  This section extends that result to arbitrary subordinators that are strictly increasing.  Let $D(t)$ be a
strictly increasing L\'evy process (subordinator) with ${\mathbb E}[e^{-s D(t)}]=e^{-t\psi_D(s)}$, where the Laplace exponent
\begin{equation}\label{psiD2}
\psi_D(s)=bs+\int_0^\infty(e^{-s x}-1)\phi_D(dx) ,
\end{equation}
$b\geq 0$, and $\phi_D$ is the L\'evy measure of $D$.  Then we must have either
\begin{equation}\label{A1}
\phi_D(0,\infty)=\infty ,
\end{equation}
or $b>0$, or both.  Let $E(t)$ be the inverse subordinator \eqref{EtDef}, and recall that $N_1(t)$ is a Poisson process with rate $\lambda$.

\begin{theorem}\label{th4.1}
The time-changed Poisson process $N_1(E(t))$ is a renewal process whose IID waiting times $(J_n)$ satisfy
\begin{equation}\label{Jdef2}
\P(J_n>t)=\E [e^{-\lambda E(t)}] .
\end{equation}
\end{theorem}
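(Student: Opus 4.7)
The plan is to carry out the proof of Theorem \ref{th1} with the general L\'evy exponent $\psi_D(s)$ in place of $s^\beta$. Let $W_1,W_2,\dots$ be IID exponential$(\lambda)$ random variables with partial sums $V_n=W_1+\cdots+W_n$, so $N_1(t)=\max\{n\ge 0:V_n\le t\}$, and define the jump times $\tau_n=\sup\{t>0:N_1(E(t))<n\}$ of the FTPP. Since $D$ is strictly increasing, Lemma \ref{Dlemma} applies: combining it with $\{N_1(t)<n\}=\{V_n>t\}$ as in the proof of Theorem \ref{th1} gives $\tau_n=D(V_n-)$, and because $D$ is L\'evy it has no fixed discontinuities, so $\tau_n=D(V_n)$ a.s.

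Next I would compute the joint Laplace transform of $(\tau_1,\dots,\tau_n)$ for arbitrary $n$. Writing $\tau_k=\sum_{j=1}^{k}[D(V_j)-D(V_{j-1})]$ (with $V_0=0$) and reordering,
\begin{equation*}
\sum_{k=1}^{n}s_k\tau_k=\sum_{j=1}^{n}\sigma_j\,[D(V_j)-D(V_{j-1})],\qquad\sigma_j:=s_j+\cdots+s_n.
\end{equation*}
Conditioning on $W_1,\dots,W_n$ and using the stationary independent increments of $D$ gives $\E[e^{-\sum_k s_k\tau_k}\mid W_1,\dots,W_n]=\prod_j e^{-W_j\psi_D(\sigma_j)}$; averaging over the exponential $W_j$'s then yields
\begin{equation*}
\E\!\Big[e^{-\sum_k s_k\tau_k}\Big]=\prod_{j=1}^{n}\frac{\lambda}{\lambda+\psi_D(\sigma_j)}.
\end{equation*}
This is exactly the joint LT of $(X_1,X_1+X_2,\dots,X_1+\cdots+X_n)$ when $X_1,\dots,X_n$ are IID with Laplace transform $\lambda/(\lambda+\psi_D(s))$. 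Laplace-transform uniqueness then identifies the gap variables $X_k=\tau_k-\tau_{k-1}$ of the FTPP as IID with this common distribution, so the FTPP is a renewal process.

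It remains to identify this common distribution with the one in \eqref{Jdef2}. For any nonnegative $J$ with survival function $g(t)=\P(J>t)$, integration by parts gives $\E[e^{-sJ}]=1-s\int_0^\infty e^{-st}g(t)\,dt$; thus the task reduces to showing
\begin{equation*}
\int_0^\infty e^{-st}\E\!\big[e^{-\lambda E(t)}\big]\,dt=\frac{\psi_D(s)}{s(\lambda+\psi_D(s))}.
\end{equation*}
Writing $e^{-\lambda E(t)}=\lambda\int_0^\infty e^{-\lambda x}\mathbf 1_{\{x>E(t)\}}\,dx$, taking expectations, and applying the duality $\{E(t)<x\}=\{D(x)>t\}$, Fubini, and $\int_0^\infty e^{-st}\P(D(x)>t)\,dt=(1-e^{-x\psi_D(s)})/s$, this left-hand side becomes $(\lambda/s)\int_0^\infty e^{-\lambda x}(1-e^{-x\psi_D(s)})\,dx$, which evaluates to the right-hand side.

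The main new ingredient relative to Theorem \ref{th1} is this last Laplace-transform identity; the conditioning and independent-increments computations driving the joint-LT identity are routine extensions of \eqref{eq2.3}. The standing hypothesis that either $\phi_D(0,\infty)=\infty$ or $b>0$ guarantees $\psi_D(s)>0$ for all $s>0$, hence $D(t)\to\infty$ and $E(t)\to\infty$ almost surely, so that $\E[e^{-\lambda E(\cdot)}]$ is a genuine survival function.
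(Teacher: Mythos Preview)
Your proposal is correct and follows essentially the same route as the paper: identify $\tau_n=D(V_n-)$ via Lemma~\ref{Dlemma}, compute the joint Laplace transform of $(\tau_1,\dots,\tau_n)$ by conditioning on the exponentials and using the independent increments of $D$, and match this against the product form coming from IID waiting times with LT $\lambda/(\lambda+\psi_D(s))$. The only noteworthy difference is that where the paper invokes \cite[Corollary~3.5]{triCTRW} for the identity $\int_0^\infty e^{-st}\E[e^{-\lambda E(t)}]\,dt=\psi_D(s)/[s(\lambda+\psi_D(s))]$, you supply a short self-contained derivation via Fubini and the inverse relation between $E$ and $D$ (your set identity $\{E(t)<x\}=\{D(x)>t\}$ should really be read up to a null set, but this does not affect the integral).
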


\begin{proof}
The proof is similar to Theorem \ref{th1}.  Take
$N_1(t)=\max\{n\geq 0:V_n\leq t\}$, where $V_n=W_1+\cdots +W_n$,
with $W_n$ IID as $\P(W_n>t)=e^{-\lambda t}$. Let
\[\tau_n=\sup\{t>0:N_1(E(t))<n\}=\sup\{t>0:E(t)<V_n\} \]
and apply Lemma \ref{Dlemma} to get $\tau_n=D(V_n-)$.  Then, as
in the proof of Theorem \ref{th1}, we have
\begin{equation}\begin{split}\label{tauLT}
\E(e^{-s\tau_1})&=\E(e^{-sD(W_1-)})\\
&=\E\left[\E\left(e^{-sD(W_1)}\big|W_1\right)\right]=\E\left[e^{-W_1 \psi_D(s)}\right]=\frac{\lambda}{\lambda+\psi_D(s)} .
\end{split}\end{equation}
By \cite[Corollary 3.5]{triCTRW}, the IID random variables $J_n$ in \eqref{Jdef2} satisfy
\begin{equation}\label{EtLLT}
\int_0^\infty e^{-st}\, \P(J_n>t)\, dt=\int_0^\infty e^{-st}\, \E [e^{-\lambda E(t)}]\, dt
=\frac{\psi_D(s)}{s(\lambda+\psi_D(s))} .
\end{equation}
Integrate by parts to get
\begin{equation}\label{JnLT}
\int_0^\infty e^{-st}\, \P_{J_n}(dt)=\int_0^\infty s e^{-st}\, \left[1-\P(J_n>t)\right]\, dt
=1-\frac{\psi_D(s)}{\lambda+\psi_D(s)}=\frac{\lambda}{\lambda+\psi_D(s)},
\end{equation}
 which shows that $T_1=J_1$ is identically distributed with $\tau_1$.  Extend this argument, as in the
 proof of Theorem \ref{th1}, to show that $(T_1,\ldots,T_n)$ is identically distributed with $(\tau_1,\ldots,\tau_n)$
 for any positive integer $n$.  For example, when $n=2$, write
 \begin{equation*}\begin{split}
\E(e^{-s_1D(t_1)}e^{-s_2D(t_1+t_2)})
&=\E(e^{-(s_1+s_2)D(t_1)}e^{-s_2[D(t_1+t_2)-D(t_1)]})
=e^{-t_1\psi_D(s_1+s_2)}e^{-t_2\psi_D(s_2)}
\end{split}\end{equation*}
and condition to get
\begin{equation*}\begin{split}
\E(e^{-s_1\tau_1-s_2\tau_2})
&=\E(e^{-s_1D(W_1-)-s_2D([W_1+W_2]-)})\\
&=\E\left[\E\left(e^{-s_1D(W_1)-s_2D(W_1+W_2)}\big|W_1,W_2\right)\right]\\
&=\E\left[e^{-W_1\psi_D(s_1+s_2)}e^{-W_2\psi_D(s_2)}\right]
=\frac{\lambda}{\lambda+\psi_D(s_1+s_2)}\cdot\frac{\lambda}{\lambda+\psi_D(s_2)}.
\end{split}\end{equation*}
On the other hand,
\[\E(e^{-s_1T_1}e^{-s_2T_2})=\E(e^{-s_1J_1}e^{-s_2(J_1+J_2)})= \frac{\lambda}{\lambda+\psi_D(s_1+s_2)}\cdot\frac{\lambda}{\lambda+\psi_D(s_2)} \]
using the fact that $(J_n)$ are IID.  To finish the proof, use continuous mapping to show that $(J_1,\ldots,J_n)$ is identically distributed with $(X_1,\ldots,X_n)$, where $X_n=\tau_n-\tau_{n-1}$ are the waiting times between jumps for the process $N_1(E(t))$.
\end{proof}

\begin{remark}
Let $N_D(t)$ denote the renewal process from Theorem
\ref{th4.1}, so that
\begin{equation}\label{renewal-process-D}
N_D(t)=\max\{n\geq 0: T_n\leq t\},
\end{equation}
where $T_n=\sum_{i=1}^n J_i$ and $(J_n)$ are IID according to \eqref{Jdef2}.  Theorem \ref{th4.1} shows that
 $N_D(t)=N_1(E(t))$.  This extends the relation $N_\beta(t)=N_1(E(t))$ from Theorem \ref{th1}, the special case
  of an inverse stable subordinator $E(t)$ and Mittag-Leffler waiting times $J_n$, to a general inverse
  subordinator.
\end{remark}

\begin{remark}
Let $M(t)=\E (N_D(t))$ denote the renewal function
of the renewal process $N_D(t)$. Then using Lageras \cite[Equation
4]{Lageras}, it follows that the LT of $M(t)$ is
$\lambda/\psi_D(s).$
\end{remark}

\begin{remark}
A simple conditioning argument shows that
\[
p_D(n,t)=\P(N_D(t)=n)=\int_0^\infty P(N_1(x)=n)h(x,t)\,dx=\int_0^\infty e^{-\lambda x}\frac{(\lambda x)^n}{n!}h(x,t)\,dx
\]
where $h(x,t)$ is the density of $E(t)$.  A straightforward extension of the argument in Remark \ref{FPPpmf} shows that
\begin{equation}\label{genLT}
\tilde p_D(n,s)=\frac{s^{-1}\psi_D(s)}{\lambda+\psi_D(s)}\cdot\frac{\lambda^n}{(\lambda +\psi_D(s))^n}
\end{equation}
which reduces to \eqref{pnsLT} in the special case $\psi_D(s)=s^\beta$ for a stable subordinator $D(t)$.  Use \eqref{EtLLT} and \eqref{JnLT} to see that the first factor in \eqref{genLT} is the LT ($t\mapsto s$) of  $\tilde h(\lambda,t)=\P(J_n>t)=\E(e^{-\lambda E(t)})$, and the second is the LT of $T_n=J_1+J_2+\cdots+J_n$ with $J_n$ IID as in \eqref{Jdef2}. Denote the distribution of $T_n$ by $F^{(n,*)}$, the $n$-fold convolution of the distribution function $F$ of $J_1$.  Invert the LT to get
\begin{equation}\label{pDnt}
p_D(n,t)=\int_{0}^t \tilde h(\lambda,t-s) F^{(n,*)}(ds)
\end{equation}
which extends \eqref{pntPMF}.
\end{remark}

\section{CTRW scaling limits and governing equations}

In this section, we extend the fractional calculus results of Section \ref{FCsec} to the inverse subordinators of
Section \ref{sec4}.
A general theory of CTRW scaling limits and governing equations is developed in \cite{triCTRW}.  Consider a
sequence of CTRW indexed by a scale parameter $c>0$. Take $J_n^{c}$ nonnegative IID random variables representing the
 waiting times between particle jumps and $T^{c}(n)=\sum_{i=1}^nJ_i^{c}$, the time of the $n$th jump.
  Let $Y_i^{c}$ be IID random vectors on $\rd$ representing the particle jumps, independent of the waiting times, and
   set $S^{c}(n)=\sum_{i=1}^nY_i^{c}$, the location of the particle after $n$ jumps. Define $N_t^{c}=\max\{n\geq 0 : T^{c}(n)\leq t\}$,
   the number of jumps by time $t\geq 0$ and
\begin{equation}\label{defCTRW}
X^{c}(t)=S^{c}(N_t^{c})=\sum_{i=1}^{N_t^{c}}Y_i^{c}
\end{equation}
the position of the particle at time $t\geq 0$ and scale $c>0$.  Assume a triangular array limit
\begin{equation}\label{convassump}
\{(S^{c}(ct),T^{c}(ct))\}_{t\geq
0}\Rightarrow\{(A(t),D(t)\}_{t\geq 0},\quad\text{as $c\to\infty$},
\end{equation}
in the $J_1$ topology on $D([0,\infty),\rd\times\rr_+)$, so that
$A(t)$ and $D(t)$ are independent L\'evy processes on $\rd$ and
$\rr$,
 respectively. Since the waiting times are nonnegative, $D(t)$ is
a subordinator.  In this section, we assume the drift $b=0$ in
\eqref{psiD2}, as well as condition \eqref{A1} and
\begin{equation}\label{A2}
\int_0^1 y|\ln y|\,\phi_D(dy)<\infty .
\end{equation}
Assumption \eqref{A1} implies that the process $\{D(t)\}$ is strictly increasing, i.e., $D(t)$ is not compound Poisson.  Then \cite[Theorem 3.1]{triCTRW} shows that the inverse subordinator $E(t)$ in \eqref{EtDef} has a Lebesgue density
\begin{equation}\label{Edensity}
h(x,t)=\int_0^t\phi_D(t-y,\infty)\,\P_{D(x)}(dy) .
\end{equation}
Write $\E[e^{-sD(t)}]=e^{-t\psi_D(s)}$, as before.  Let
$P(x,t)=\P(A(t)\leq x)$ be the distribution function of
 $A(t)$, and write
\[\hat P(k,t)=\int e^{-ik\cdot x}P(dx,t)=e^{-t\psi_A(k)},\]
where $\psi_A(k)$ is the Fourier symbol of $A$.  The symbols define pseudo-differential operators:
$\psi_D(\partial_t)f(t)$ has LT $\psi_D(s)\tilde f(s)$, and $\psi_A(-iD_x)f(x)$ has FT $\psi_A(k)\hat f(k)$, for
suitable functions $f$.  Then \cite[Theorem 2.1]{triCTRW} establishes the CTRW scaling limit
\begin{equation}\label{conv1}
\{X^{c}(t)\}_{t\geq 0}\Rightarrow \{A(E(t))\}_{t\geq
0},\quad\text{as $c\to\infty$},
\end{equation}
in the $M_1$-topology on $D([0,\infty),\rd)$.  Recall that a function $Q$ is a {\it mild solution} to a space-time
pseudo-differential equation if its (Fourier-Laplace or Laplace-Laplace) transform solves the equivalent algebraic
equation in transform space.   The next result is a small extension of \cite[Theorem 4.1]{triCTRW}.

\begin{theorem}\label{gov}
Assume \eqref{convassump} holds, where $D(t)$ is a subordinator without drift such that conditions
 \eqref{A1} and \eqref{A2} hold.  The distribution function of the CTRW limit process $A(E(t))$ in \eqref{conv1} is
 given by
\begin{equation}\label{h1}
Q(x,t)=\int_0^\infty P(x,u)h(u,t)\,du
\end{equation}
where $h(u,t)$ is the density \eqref{Edensity} of the inverse subordinator $E(t)$.  The distribution function
 $Q(x,t)$ solves the generalized Cauchy problem
\begin{equation}\label{gCp}
\psi_D(\partial_t)Q(x,t)=-\psi_A(-iD_x)Q(x,t)+H(x)\phi_D(t,\infty)
\end{equation}
in the mild sense, where $H(x)=I(x\geq 0)$ is the Heaviside function.  Furthermore, $P(x,u)$ solves the Cauchy problem
\begin{equation}\label{CP}
\partial_t P(x,t)=-\psi_A(-iD_x) P(x,t);\quad P(x,0)=H(x),
\end{equation}
and $h(x,t)$ solves the inhomogeneous Cauchy problem
\begin{equation}\label{ICP}
\partial_x h(x,t)=-\psi_D(\partial_t) h(x,t) +\delta(x)\phi_D(t,\infty) .
\end{equation}
\end{theorem}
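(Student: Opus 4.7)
The plan is to establish the four assertions in turn and glue them together. The integral representation \eqref{h1} for the distribution function $Q$ follows by conditioning on $E(t)$: the joint convergence \eqref{convassump} ensures that the limiting processes $A$ and $D$ are independent, hence $A$ is independent of $E$, and a tower-property computation yields $Q(x,t)=\int_0^\infty \P(A(u)\le x)\,h(u,t)\,du$.

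For the Cauchy problem \eqref{CP} I would use the L\'evy identity $\E[e^{-ik\cdot A(t)}]=e^{-t\psi_A(k)}$; differentiating in $t$ and inverting the Fourier transform recovers \eqref{CP} in the mild sense, with $P(x,0)=H(x)$ since $A(0)=0$. For the inhomogeneous Cauchy problem \eqref{ICP}, I would extend the Laplace-transform computation used in the proof of Theorem \ref{th4.1} (itself based on Corollary 3.5 of \cite{triCTRW}) to obtain
\[
\tilde h(x,s)=\frac{\psi_D(s)}{s}\,e^{-x\psi_D(s)},\qquad x>0,
\]
the natural generalization of \eqref{MMMLT}. Differentiating classically in $x$ gives $\partial_x\tilde h(x,s)=-\psi_D(s)\tilde h(x,s)$ on $\{x>0\}$, while the boundary value $\tilde h(0+,s)=\psi_D(s)/s$ inverts to $h(0+,t)=\phi_D(t,\infty)$, since under \eqref{psiD2} with $b=0$ a swap of integrals gives $\psi_D(s)/s=\int_0^\infty e^{-st}\phi_D(t,\infty)\,dt$. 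Assembling the classical equation on $\{x>0\}$ with this boundary jump produces \eqref{ICP} as a distributional identity in $x$; alternatively, $h(0+,t)=\phi_D(t,\infty)$ can be read off directly from \eqref{Edensity} by setting $x=0$ and noting $\P_{D(0)}=\delta_0$.

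To obtain \eqref{gCp}, I would apply $\psi_D(\partial_t)$ to both sides of \eqref{h1}, push it under the integral since it acts only in $t$, use $\psi_D(\partial_t)h(u,t)=-\partial_u h(u,t)$ on $\{u>0\}$ from the classical part of \eqref{ICP}, and integrate by parts in $u$. The boundary term at $u=0$ produces $P(x,0)h(0+,t)=H(x)\phi_D(t,\infty)$, while the interior term, after replacing $\partial_u P$ by $-\psi_A(-iD_x)P$ via \eqref{CP} and pulling $\psi_A(-iD_x)$ outside the $u$-integral, becomes $-\psi_A(-iD_x)Q(x,t)$; assembling these pieces yields \eqref{gCp}. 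Since the equation is to be read in the mild sense, one may equivalently verify the algebraic identity $(\psi_A(k)+\psi_D(s))\bar Q(k,s)=\hat H(k)\psi_D(s)/s$ directly from the Fourier-Laplace transform $\bar Q(k,s)=\hat H(k)\psi_D(s)/[s(\psi_A(k)+\psi_D(s))]$, which is a one-line consequence of substituting the transforms of $P$ and $h$ into \eqref{h1}.

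The main obstacle, I expect, is the correct handling of the boundary at $u=0$: the identification $h(0+,t)=\phi_D(t,\infty)$ is exactly what produces the inhomogeneity in both \eqref{ICP} and \eqref{gCp}, and the regularity hypotheses of the theorem, notably \eqref{A1} and \eqref{A2}, are what ensure that this boundary identification, the distributional differentiation at $x=0$, and the commutation of $\psi_D(\partial_t)$ and $\psi_A(-iD_x)$ with the $u$-integral, are all rigorously justified.
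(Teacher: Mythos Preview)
Your proposal is correct. The paper's proof is purely transform-based: it cites the Fourier--Laplace transform $\bar Q(k,s)=s^{-1}\psi_D(s)/(\psi_A(k)+\psi_D(s))$ from \cite[Theorem 3.6]{triCTRW}, rearranges it algebraically to the form of \eqref{gCp}, and inverts using $s^{-1}\psi_D(s)=\int_0^\infty e^{-st}\phi_D(t,\infty)\,dt$; \eqref{ICP} is handled the same way through the double Laplace transform $\tilde h(\lambda,s)=s^{-1}\psi_D(s)/(\lambda+\psi_D(s))$. Your one-line alternative for \eqref{gCp} at the end of the proposal is exactly this route, so the two proofs coincide there.

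Your primary approach---apply $\psi_D(\partial_t)$ under the integral in \eqref{h1}, replace $\psi_D(\partial_t)h$ by $-\partial_u h$ via the classical part of \eqref{ICP}, integrate by parts in $u$, pick up the boundary term $P(x,0)h(0+,t)=H(x)\phi_D(t,\infty)$, then replace $\partial_u P$ by $-\psi_A(-iD_x)P$ via \eqref{CP}---is a more constructive derivation that makes the origin of the inhomogeneity transparent, but it needs the regularity justifications you correctly flag (commutation of the pseudo-differential operators with the $u$-integral, decay of $h(u,t)$ as $u\to\infty$). Since the theorem only asserts the mild (transform-space) identity, the paper's shortcut suffices and sidesteps those issues. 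Likewise, for \eqref{ICP} the paper stays entirely in the $(\lambda,s)$ domain, while you keep $x$ real, compute $\tilde h(x,s)=s^{-1}\psi_D(s)e^{-x\psi_D(s)}$, and isolate the boundary jump $h(0+,t)=\phi_D(t,\infty)$; your version gives a clearer picture of why the $\delta(x)$ term appears, and your direct reading of $h(0+,t)$ from \eqref{Edensity} by setting $x=0$ is a nice observation not made explicit in the paper.
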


\begin{proof}
The proof is similar to \cite[Theorem 4.1]{triCTRW}.  Equation
\eqref{h1} follows from a simple conditioning argument.  Apply
\cite[Theorem 3.6]{triCTRW} to see that $Q(x,t)$ has FLT
\begin{equation}\label{barm}
\bar Q(k,s)=\int_0^\infty e^{-st}\int_{\rd} e^{-ik\cdot x}Q(dx,t)\,dt=\frac 1s\frac{\psi_D(s)}{\psi_A(k)+\psi_D(s)}
\end{equation}
and rearrange to get
\begin{equation}\label{MFLT}
\psi_D(s)\,\bar Q(k,s)=-\psi_A(k)\,\bar Q(k,s)+s^{-1}\psi_D(s) .
\end{equation}
  From \cite[Eq.\ (3.12)]{triCTRW} we get
\begin{equation}\label{bert}
\int_0^\infty e^{-su}\phi_D(u,\infty)\,du =s^{-1}\psi_D(s) .
\end{equation}
Now invert the FLT \eqref{MFLT}, using \eqref{bert} and $\int e^{-ik\cdot x}H(dx)\equiv 1$, to arrive at \eqref{gCp}.  It is well known that $P(x,t)$ solves the Cauchy problem \eqref{CP}, see for example \cite{HiPh}.  Equation \eqref{EtLLT} shows that the bivariate Laplace transform (LLT)
\[\tilde h(\lambda,s)=\int_0^\infty \int_0^\infty e^{-\lambda z-st}h(z,t)\,dt\,dz=\frac 1s\frac{\psi_D(s)}{\lambda+\psi_D(s)} . \]
This rearranges to
\[\lambda \tilde h(\xi,s)=-\psi_D(s)\tilde h(\lambda,s) +s^{-1}\psi_D(s) .\]
Inverting the LLT using \eqref{bert} to see that $h(x,t)$ solves \eqref{ICP}.
\end{proof}

For any random walk $S(n)=\sum_{i=1}^n Y_i$, the compound Poisson process $A(t)=S(N_1(t))$ is a L\'evy process.  Introduce IID waiting times \eqref{Jdef2} between these random walk jumps to get a CTRW.  In this case, the CTRW is exactly of the form $A(E(t))$, without passing to the limit.  Then the governing equations in Theorem \ref{gov} pertain to the CTRW itself.

\begin{theorem}\label{govCTRW}
Assume  $D(t)$ is a subordinator without drift such that conditions \eqref{A1} and \eqref{A2} hold, and let $E(t)$ be
 the inverse subordinator \eqref{EtDef}.  Take $J_n$ IID waiting times according to \eqref{Jdef2}, and let $N_D(t)$
 denote the renewal process \eqref{renewal-process-D}.  Take $Y_n$ IID jumps on $\rd$, independent from $(J_n)$, with
 common distribution $\mu$, and let $S(n)=\sum_{i=1}^n Y_i$.  Then the distribution function $P(x,t)=\P(X(t)\leq x)$
 of the CTRW $X(t)=S(N_D(t))$ solves the generalized Cauchy problem
\begin{equation}\label{gCpCTRW}
\psi_D(\partial_t)P(x,t)=-\lambda P(x,t)+\lambda \int P(x-y,t)\,\mu(dy)+H(x)\phi_D(t,\infty)
\end{equation}
in the mild sense.  Furthermore, $X(t)=A(E(t))$, where
$A(t)=S(N_1(t))$ is a compound Poisson process.
\end{theorem}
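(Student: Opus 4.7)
The plan is to derive both assertions of Theorem \ref{govCTRW} by specializing Theorem \ref{gov} to the case where the driving L\'evy process is the compound Poisson process $A(t)=S(N_1(t))$ with rate $\lambda$ and jump law $\mu$. Once the identification $X(t)=A(E(t))$ is in place, the generalized Cauchy problem \eqref{gCpCTRW} follows by recognizing the abstract pseudo-differential operator $\psi_A(-iD_x)$ as a concrete convolution operator.

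I would begin with the identification. Because $(J_n)$ is IID with $\P(J_n>t)=\E[e^{-\lambda E(t)}]$, Theorem \ref{th4.1} implies that the renewal counting process $N_D(t)$ has the same finite-dimensional distributions as $N_1(E(t))$, where $N_1$ is a rate-$\lambda$ Poisson process independent of $D$. Since the jump sequence $(Y_n)$ is independent of the waiting times, subordinating the random walk $S(\cdot)$ to either clock produces processes with the same finite-dimensional distributions, so
\[X(t)=S(N_D(t))\stackrel{d}{=}S(N_1(E(t)))=A(E(t)).\]
This proves the second assertion, and in particular the distribution function $P(x,t)=\P(X(t)\leq x)$ coincides with the function $Q(x,t)$ of Theorem \ref{gov} applied to this pair $(A,D)$.

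Next I would identify the Fourier symbol of $A$. Conditioning on $N_1(t)$ and using the independence of the jumps gives
\[\E[e^{-ik\cdot A(t)}]=\sum_{n=0}^\infty e^{-\lambda t}\frac{(\lambda t)^n}{n!}\hat\mu(k)^n=\exp\bigl(-\lambda t(1-\hat\mu(k))\bigr),\]
where $\hat\mu(k)=\int e^{-ik\cdot y}\mu(dy)$, so $\psi_A(k)=\lambda(1-\hat\mu(k))$. Because multiplication by $\hat\mu(k)$ in Fourier space corresponds to convolution against $\mu$ in physical space, the associated pseudo-differential operator acts by
\[\psi_A(-iD_x)f(x)=\lambda f(x)-\lambda\int f(x-y)\,\mu(dy).\]

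Theorem \ref{gov} then says that $P(x,t)=Q(x,t)$ solves $\psi_D(\partial_t)P(x,t)=-\psi_A(-iD_x)P(x,t)+H(x)\phi_D(t,\infty)$ in the mild sense; substituting the explicit form of $\psi_A(-iD_x)$ above produces \eqref{gCpCTRW} verbatim. The only potential obstacle is verifying that $(A,D)$ meets the hypotheses of Theorem \ref{gov}: $A$ is a genuine L\'evy process on $\rd$ (compound Poisson processes always are), and the conditions \eqref{A1} and \eqref{A2} on $D$ are assumed in the statement. Everything else is bookkeeping with the Fourier multiplier, so no truly hard step is involved.
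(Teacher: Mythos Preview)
Your proposal is correct and follows essentially the same route as the paper: invoke Theorem~\ref{th4.1} to identify $N_D(t)$ with $N_1(E(t))$, hence $X(t)$ with $A(E(t))$, compute the compound Poisson symbol $\psi_A(k)=\lambda(1-\hat\mu(k))$, and then specialize Theorem~\ref{gov} to obtain \eqref{gCpCTRW}. The only cosmetic difference is that you phrase the identification as equality of finite-dimensional distributions rather than pathwise equality, which is all that is needed for the distribution function $P(x,t)$ anyway.
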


\begin{proof}
Theorem \ref{th4.1} yields $N_D(t)=N_1(E(t))$, and then the CTRW is
\[X(t)=S(N_D(t))=S(N_1(E(t)))=A(E(t)) .\]
A standard conditioning argument shows that the compound Poisson
FT $\hat P(k,t)=e^{-t\psi_A(k)}$, where the Fourier symbol
$\psi_A(k)=\lambda (1-\hat\mu(k))$.  The inverse FT of
$\psi_A(k)\hat f(k)$ is
\begin{equation}\label{CPsymbol}
\psi_A(-iD_x)f(x)=-\lambda f(x)+\lambda \int f(x-y)\,\mu(dy)
\end{equation}
using the FT convolution property.  Now Theorem \ref{gov} implies that \eqref{gCpCTRW} holds.
\end{proof}

\begin{remark}\label{rem5.3}
In the situation of Theorem \ref{govCTRW}, where $A(t)$ is compound Poisson, the distribution function $P(x,t)=\P(A(t)\leq x)$ solves the Cauchy problem \eqref{CP}, which can be written in this case as
\begin{equation}\label{KFE}
\partial_t P(x,t)=-\lambda P(x,t)+\lambda \int_{-\infty}^\infty P(x-y,t)\,\mu(dy);\quad P(x,0)=H(x) .
\end{equation}
This is the Kolmogorov forward equation for the Markov process $A(t)$.  If $\mu$ has density $w(x)$, apply
$\partial_x$ on both sides of \eqref{KFE} to see that the probability density $p(x,t)=\partial_x P(x,t)$ of $A(t)$
 solves \eqref{cpcp}.  If $D$ is the stable subordinator with Laplace symbol $\psi_D(s)=s^\beta$, then
  \eqref{gCpCTRW} holds with $\phi_D(t,\infty)=t^{-\beta}/\Gamma(1-\beta)$ and $\psi_D(\partial_t)=\D_t^\beta$,
  the Riemann-Liouville fractional derivative.  The Riemann-Liouville fractional derivative is defined
  for $0\leq n-1<\beta<n$ by
\begin{equation}\label{RLDef}
\D_t^\beta g(t)=\frac{1}{\Gamma(n-\beta)}\frac{d^n}{dt^n} \int_0^t
{(t-r)^{n-1-\beta}}{g^{(n)}(r)\,dr},
\end{equation}
which differs from the Caputo derivative \eqref{CaputoDef} in that the derivative is applied after the integration.
 The LT of $\D_t^\beta g(t)$ is $s^\beta \tilde g(s)$.  Apply $\partial_x$ to both sides of \eqref{gCpCTRW} in this
 case to get
\[\D_t^\beta p(x,t)=-\lambda p(x,t)+\lambda \int p(x-y,t)\,\mu(dy) +\delta(x)\frac{t^{-\beta}}{\Gamma(1-\beta)} ,\]
the fractional kinetic equation of Zaslavsky \cite{Zaslavsky}.  To recover \eqref{master-equation-1},
 use $\partial_t^\beta g(t)=\D_t^\beta g(t)-g(0)t^{-\beta}/\Gamma(1-\beta)$ and $p(x,0)=\delta(x)$.
\end{remark}

\begin{remark}\label{OBrem}
In the special case where $\mu=\varepsilon_1$ is a point mass, so that $Y_n=1$ almost surely, $A(t)=N_1(t)$ is a Poisson process with rate $\lambda>0$.    Then the distribution function $P(x,t)$ of the renewal process $N_D(t)=A(E(t))$ solves
\begin{equation}\label{gCpRP}
\psi_D(\partial_t)P(x,t)=-\lambda [P(x,t)- P(x-1,t)]+H(x)\phi_D(t,\infty) .
\end{equation}
If $D$ is the stable subordinator with Laplace symbol $\psi_D(s)=s^\beta$, Equation \eqref{gCpRP} reduces to
\[\partial_t^\beta P(x,t)=-\lambda [P(x,t)- P(x-1,t)] \]
as in Remark \ref{rem5.3}. The probability mass function $p(n,t)=P(n,1)-P(n-1,t)=\Delta P(n,t)$ for $n>0$.  Apply the difference operator $\Delta$ on both sides to obtain
\[\partial_t^\beta p(n,t)=-\lambda [p(n,t)- p(n-1,t)] \]
as in Jumarie \cite{Jumarie01}.
\end{remark}

\begin{remark}\label{CTRWrem}
Scher and Lax \cite{ScherLax} showed that a CTRW with waiting time distribution $\omega$ and jump distribution $\nu$
has FLT
\[\bar Q(k,s)=\frac 1s \frac{1-\tilde\omega(s)}{1-\tilde\omega(s)\hat\nu(k)} ,
\]
where $\hat\nu(k)=\int e^{-ik\cdot x}\nu(dx)$.  To reconcile with Theorem \ref{govCTRW}, recall from \eqref{JnLT} that
 the waiting times \eqref{Jdef2} in Theorem \ref{govCTRW} have LT
\[\tilde\omega(s)=\int e^{-st}\omega(dt)=\frac{\lambda}{\lambda+\psi_D(s)}\]
and then it follows that
$\psi_D(s)=\lambda({1-\tilde\omega(s)})/{\tilde\omega(s)} .$
The jumps $Y_n$ in Theorem \ref{govCTRW} have Fourier symbol $\psi_A(k)=\lambda (1-\hat\mu(k))$ and then \eqref{barm}
implies
\begin{equation*}\begin{split}
\bar Q(k,s)&=\frac 1s\frac{\psi_D(s)}{\psi_A(k)+\psi_D(s)}
=\frac 1s \frac{\frac{1-\tilde\omega(s)}{\tilde\omega(s)}}{\frac{1-\tilde\omega(s)}{\tilde\omega(s)}+(1-\hat\mu(k))}
=\frac 1s \frac{1-\tilde\omega(s)}{1-\tilde\omega(s)\hat\mu(k)}
\end{split}\end{equation*}
which provides a different proof that the CTRW equals $A(E(t))$ in this case.  To simulate the sample paths of the non-Markovian process $A(E(t))$, it is sufficient to simulate the CTRW.  In particular, the renewal process $N_D(t)$ gives the exact jump times of the inverse subordinator $E(t)$.
\end{remark}

\begin{remark}
In the general case, where $A(t)$ is not compound Poisson, Theorem \ref{govCTRW} provides a useful approximation.  Given a L\'evy process $A(t)$, take $Y_n=A(n)-A(n-1)$, so that $S(n)=A(n)$.  Take $N(t)$ a Poisson process with rate $1$, so that
$S(\lambda^{-1}N(\lambda t))$ is compound Poisson with Fourier symbol
\[\lambda(1-e^{-\lambda^{-1}\psi_A(k)})\to \psi_A(k),\quad\text{as $\lambda\to\infty$.}\]
Then $S(\lambda^{-1}N(\lambda t))\Rightarrow A(t)$ as $\lambda\to\infty$, and the CTRW with IID waiting times \eqref{Jdef2} and these compound Poisson jumps converges to $A(E(t))$ as $\lambda\to\infty$.  As in Remark \ref{CTRWrem}, this fact can be used to simulate sample paths of the process $A(E(t))$.   This fact has been exploited by Fulger, Scalas and Germano \cite{FSG} to develop fast simulation methods for space-time fractional diffusion equations.
\end{remark}

\begin{example}
Tempered stable subordinators are theoretically interesting \cite{temperedLM,Rosinski} and practically useful
 \cite{Cont04,TemperedStable}. Take $D(t)$ tempered stable with Laplace symbol $\psi_D(s)=(s+a)^\beta-a^\beta$
 for $a>0$ and $0<\beta<1$, and let $E(t)$ be its inverse \eqref{EtDef}.  Theorem \ref{th4.1} shows that $N_1(E(t))$
  is a renewal process.  Let $(\tau_n)$ denote the arrival times of this renewal process, and use \eqref{tauLT} to get
\begin{equation*}\begin{split}
\E(e^{-s\tau_1})
&=\frac{\lambda}{\lambda+(s+a)^\beta-a^\beta} .
\end{split}\end{equation*}
This tempered fractional Poisson process $N_1(E(t))$ has tempered Mittag-Leffler waiting times, but with a different
 rate parameter:  Use \eqref{T1LT} to see that the  Mittag-Leffler PDF
$f(t)=\partial_t[1-E_\beta(-\eta t^\beta)]$
has Laplace transform
${\eta}/({\eta+s^\beta})$,
and so
\[\int_0^\infty e^{-st}f(t)e^{-at} dt=\frac{\eta}{\eta+(s+a)^\beta} .\]
Of course $f(t)e^{-at}$ is not a PDF, and in fact we have (set $s=0$ above)
\[\int_0^\infty f(t)e^{-at} dt=\frac{\eta}{\eta+a^\beta} .\]
Then the tempered Mittag-Leffler PDF
$f_a(t)=f(t)e^{-at}({\eta+a^\beta})/{\eta}$
has LT
\begin{equation*}\begin{split}
\int_0^\infty e^{-st}f_a(t) dt
&=\frac{\eta+a^\beta}{\eta+(s+a)^\beta}=\frac{\lambda}{\lambda+(s+a)^\beta-a^\beta}=\E(e^{-s\tau_1})
\end{split}\end{equation*}
when $\eta+a^\beta=\lambda$.  Cartea and Del-Castillo
\cite{Cartea2007} show that the tempered fractional
 derivative
$\psi_D(\partial_t)g(t)=e^{-a t}\,{\partial_t^{\beta}}[e^{a
t}\,g(t)]-a^\beta g(t) .$ It is also known (e.g., see
\cite{temperedLM}) that the corresponding L\'evy measure is
exponentially tempered: $\psi_D(dt)=e^{-at} \psi(dt)$, where
$\psi(t,\infty)=t^{-\beta}/\Gamma(1-\beta)$ is the L\'evy measure
 of the standard $\beta$-stable subordinator.
Then Theorem \ref{govCTRW} shows that the CTRW with tempered Mittag-Leffler waiting times and compound Poisson
 jumps solves a tempered fractional Cauchy problem
\begin{equation*}\label{TSgov}
e^{-a t}\,{\partial_t^{\beta}}[e^{a t}\,P(x,t)]-a^\beta P(x,t)
=\psi_A(-iD_x) P(x,t)+H(x)\phi_D(t,\infty)
\end{equation*}
with $\psi_A(-iD_x)$ given by \eqref{CPsymbol} and $\phi_D(t,\infty)= {\beta}\int_t^\infty e^{-at}t^{-\beta-1}dt
 /{\Gamma(1-\beta)}$.  More generally, Theorem \ref{gov} shows that the distribution function of the CTRW scaling
 limit $A(E(t))$ is governed by this equation, with the corresponding operator $\psi_A(-iD_x)$.  Apply $\partial_x$
 on both sides of \eqref{gCpRP} to see that the PDF of the renewal process with tempered Mittag-Leffler waiting times
  solves
\begin{equation*}\label{TSFPPgov}
e^{-a t}\,{\partial_t^{\beta}}[e^{a t}\,p(x,t)]-a^\beta p(x,t)=-\lambda [p(x,t)- p(x-1,t)]+\delta(x)\phi_D(t,\infty) .
\end{equation*}
A wide variety of tempered stable models in $\rd$ are discussed in Rosi\'nski \cite{Rosinski}.  Random walks in $\rd$
 with tempered stable scaling limit are developed in \cite{TSconv}.  For exponentially tempered stable waiting times
  in $\rr^1$, a renewal process with tempered Mittag-Leffler waiting times gives the same process exactly,
  without taking limits.  This can be useful for simulating sample paths.
\end{example}

\begin{example}
Chechkin et al.\ \cite{klafterultra,retarding} used distributed
order fractional derivatives to model multi-scale anomalous
subdiffusion, where a different power law pertains at short and
long time scales, and ultraslow diffusion, for a plume of
particles spreading at a logarithmic rate.  Given a finite Borel
measure $\nu$ on $(0,1)$, the distributed order fractional
derivative is defined by
\begin{equation}\label{DOFDdef}
\D_t^{\nu}g(t)=\int_0^1 \partial_t^\beta g(t)\nu(d\beta),
\end{equation}
where $\partial_t^\beta$ is the Caputo fractional derivative \eqref{CaputoDef}.
If $\nu$ is discrete, this is a linear combination of fractional derivatives.  Let $D(t)$ be the distributed
order stable subordinator with Laplace symbol $\psi_D(s)=\int s^\beta \nu(d\beta)$ and $E(t)$ its inverse \eqref{EtDef}.
Let $\nu(d\beta)=p(\beta)d\beta$ for some $p\in C^1(0,1)$, then by (2.19) in Kochubei \cite{koch3}
\begin{equation}\label{DOtail}
P(J_n>t)=\E (e^{\lambda E(t)})=\frac{\lambda}{\pi}\int_0^\infty r^{-1}e^{-tr}\Phi(r,1)dr
\end{equation}
where
$$
\Phi(r,1)=\frac{\int_0^1r^\beta\sin
(\beta\pi)\Gamma(1-\beta)p(\beta)d\beta}{[\int_0^1r^\beta \cos
(\beta\pi)\Gamma(1-\beta)p(\beta)d\beta+\lambda]^2+[\int_0^1r^\beta\sin
(\beta\pi) \Gamma(1-\beta)p(\beta)d\beta]^2}.
$$
Substitute \eqref{DOtail} into \eqref{pDnt} to obtain an explicit formula for the probability mass function of the distributed order Poisson process.

If $\nu(d\beta)=p(\beta)d\beta$, where $p(\beta)$ is regularly varying at $\beta=0$ with index
   $\alpha-1$ for some $\alpha>0$, then $\psi_D(s)=R(\log s)$ and $R$ is regularly varying at
    infinity with index $-\alpha$, see \cite[Lemma 3.1]{M-S-ultra}.   Then $E(t)$ is ``ultraslow'' in that
     $\E(E(t)^\gamma)=S(\log t)$, where $S$ varies regularly with index $\gamma\alpha$ at infinity,
     by \cite[Theorem 3.9]{M-S-ultra}.  Take an IID sequence of mixing variables $(B_i)$ with distribution
      $\mu$ concentrated on $(0,1)$, and assume $\P(J_i^{c}>u|B_i=\beta)=c^{-1}u^{-\beta}$ for $u\geq
      c^{-1/\beta}$,
      so that the waiting times are conditionally Pareto.  Then \cite[Theorem 3.4]{M-S-ultra} implies that the
       distributed order stable subordinator is a random walk limit $\sum_{i=1}^{[ct]} J_i^{c}\Rightarrow D(t)$.
       This requires  $\int (1-\beta)^{-1} \mu(d\beta)<\infty$ so that $\nu(d\beta)=\Gamma(1-\beta)\mu(d\beta)$ is
        a finite measure.  An easy computation shows that the L\'evy measure $\phi_D(t,\infty)=\int_0^1 t^{-\beta}
        \nu(d\beta)/\Gamma(1-\beta)$.  Then Theorem \ref{gov} implies that a CTRW with these conditionally Pareto
        waiting times has a scaling limit $A(E(t))$ whose distribution $Q(x,t)$ solves the distributed-order
        fractional diffusion equation
\[\D_t^{\nu}Q(x,t)=-\psi_A(-iD_x)Q(x,t) . \]
If $A(t)$ is compound Poisson, Theorem \ref{govCTRW} shows that
the distribution function $P(x,t)$ of a CTRW with waiting times
\eqref{Jdef2} solves \[\D_t^{\nu}P(x,t)=-\lambda P(x,t)+\lambda
\int P(x-y,t)\,\mu(dy),\] without passing to the limit.  Then the
PDF $p(x,t)$ of the renewal process with waiting times
\eqref{Jdef2} solves
\[\D_t^{\nu} p(x,t)=-\lambda [p(x,t)- p(x-1,t)] .\]
\end{example}

\end{document}